\newtheorem{theorem}{Theorem}[section]
\newtheorem{lemma}[theorem]{Lemma}
\newtheorem{corollary}[theorem]{Corollary}
\newtheorem{proposition}[theorem]{Proposition}
\theoremstyle{definition}
\newtheorem{example}[theorem]{Example}
\newcommand{\N}{\mathbb N}
\newcommand{\Z}{\mathbb Z}
\newcommand{\R}{\mathbb R}
\newcommand{\Q}{\mathbb Q}
\newcommand{\F}{\mathbb F}
\newcommand{\C}{\mathbb C}
 \DeclareMathOperator{\ord}{ord}
\DeclareMathOperator{\lcm}{lcm} 
 \DeclareMathOperator{\supp}{supp}
\DeclareMathOperator{\ch}{char}
\renewcommand{\t}{\, | \,}
\renewcommand{\time}{\negthinspace \times \negthinspace}
\newcommand{\be}{\begin{equation}}
\newcommand{\ee}{\end{equation}}
\newcommand{\bnml}{\begin{multline}}
\newcommand{\enml}{\end{multline}}
\newcommand{\buml}{\begin{multline*}}
\newcommand{\euml}{\end{multline*}}
\newcommand{\ber}{\begin{eqnarray}}
\newcommand{\eer}{\end{eqnarray}}
\numberwithin{equation}{section}
\begin{document}

\title{Clean group rings over localizations of rings of integers}

\author{Yuanlin Li }
\address{Department of Mathematics and Statistics,
Brock University, 1812 Sir Isaac Brock Way, St. Catharines, Ontario, Canada L2S 3A1
and 
Faculty of Science, Jiangsu University, Zhenjiang, Jiangsu China
}
\email{yli@brocku.ca
}

\author{Qinghai Zhong}

\address{Institute for Mathematics and Scientific Computing,  University of Graz, NAWI Graz \\
 Heinrichstra{\ss}e 36, 8010 Graz, Austria}
 \email{qinghai.zhong@uni-graz.at}

\thanks{This research was supported in part  by a  Discovery Grant from the Natural Sciences and Engineering Research Council of Canada and  by  the Austrian Science Fund FWF, Project Number P 28864-N35.}

\subjclass[2010]{Primary: 16S34 Secondary: 11R11, 11R18}

\keywords{Clean ring; Group ring; Ring of algebraic integers; Primitive root of unity; Cyclotomic field.}

\begin{abstract}
A ring $R$ is said to be clean if each element of $R$ can be written as the sum of a unit and an idempotent. In a recent article (J. Algebra, 405 (2014), 168-178), Immormino and McGoven characterized when the group ring  $\Z_{(p)}[C_n]$ is clean, where $\Z_{(p)}$ is the localization of the integers at the prime $p$. In this paper, we consider a more general setting. Let $K$ be an algebraic number field, $\mathcal O_K$ be its ring of integers, and  $R$ be a localization of $\mathcal O_K$ at some prime ideal. We investigate when $R[G]$ is  clean, where $G$ is a finite abelian group, and obtain  a complete characterization for such a group ring to be clean for the case when $K=\Q(\zeta_n)$ is  a cyclotomic field or $K=\Q(\sqrt{d})$ is  a quadratic field.
\end{abstract}

\maketitle

\bigskip
\section{Introduction}
\medskip

All rings considered here are associative with identity $1\neq 0$. An element of a ring $R$ is called clean if it is the sum of a unit and an idempotent, and a ring $R$ is called clean if each element of $R$ is clean.
Clean rings were introduced and related to exchange rings by Nicholson in 1977 \cite{Nicholson77}  and the study of clean rings has attracted a great deal of attention in recent $2$ decades. For some fundamental properties about clean rings as well as a nice history of clean rings we suggest the interested reader to check the article \cite{McGoven06}.

Let $G$ be a multiplicative group. We denote by $R[G]$ the group ring of $G$ over $R$ which is the set of all formal sums
\[
\alpha=\sum_{g\in G}\alpha_gg\,,
\]
where $\alpha_g\in R$ and the support of $\alpha$, $\supp(\alpha)=\{g\in G\mid \alpha_g\neq 0\}$, is finite.
We let $C_n$ denote the cyclic group of order $n$. Since a homomorphic image of a clean ring is a clean ring, it follows  that it is necessary that $R$ is clean whenever $R[G]$ is.

In this paper, we  investigate the question of when a commutative group ring $R[G]$  over a local ring $R$ is clean. We also study when such a group ring is $*$-clean (see  next section for the definition of $*$-clean rings).
Let $\Z_{(p)}$ denote the localization of the ring $\Z$ of integers at the prime $p$. In \cite{Ha-N01}, the authors  proved that $\Z_{(7)}[C_3]$ is not clean. It then follows that since $\Z_{(p)}$ is a clean ring (as it is local) that $R$ being a commutative clean ring is not sufficient for $R[G]$ to be a clean ring. In a recent paper \cite{Im-Mc14a}, it was shown that $\Z_{(p)}[C_3]$ is clean if and only if $p\not\equiv 1\pmod 3$. More generally, the authors gave a complete characterization of when $\Z_{(p)}[C_n]$ is clean. Note that $\Z_{(p)}$ is a local ring between $\Z$ and $\Q$. In this paper, we consider a more general setting. Let $(R, \mathfrak m)$ be a commutative local ring and we denote $\overline{R}=R/\mathfrak m$. Let $K$ be an algebraic number field, $\mathcal O_K$ be its ring of integers, and  $R$ be a localization of   $\mathcal O_K$ at some prime ideal $\mathfrak p$. We investigate when $R[G]$ is  clean, where $G$ is a finite abelian group, and provide a complete characterization for such a group ring to be clean  for the case when $K=\Q(\zeta_n)$ is  a cyclotomic field or $K=\Q(\sqrt{d})$ is  a quadratic field. Our main results are as follows.

\begin{theorem}\label{main1}
Let $K=\Q(\zeta_n)$ be a cyclotomic field for some $n\in \N$, $\mathcal O=\Z[\zeta_n]$ its rings of integers, $\mathfrak p\subset \mathcal O$ a nonzero prime ideal, and $G$ a finite abelian group.
Let $p$ be the prime with $p\Z=\mathfrak p\cap \Z$,
let  $n_0$ be  the maximal positive divisor of $n$ with $p\nmid n_0$, let $n_1$ be the maximal divisor of $\exp(G)$ with  $p\nmid n_1$ and $\gcd(n_1, n_0)=1$, and let $m'$ be the maximal divisor of $\frac{\lcm(\exp(G), n_0)}{n_0n_1}$ with $p\nmid m'$.

 Then the group ring $\mathcal O_{\mathfrak p}[G]$ is clean
 if and only if $\ord_{n_1}p=\varphi(n_1)$, $\ord_{n_0m'}p=m'\ord_{n_0}p$, and $\gcd(\ord_{n_1}p,\ord_{n_0m'}p)=1$. In particular, if $\exp(G)$ is a divisor of $n$, then $\mathcal O_{\mathfrak p}[G]$ is clean.
\end{theorem}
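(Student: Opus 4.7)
The plan is to reduce cleanness of $\mathcal O_{\mathfrak p}[G]$ to a factorization criterion on cyclotomic polynomials, and then to translate that criterion into the three stated number-theoretic conditions on the multiplicative order of $p$. Since $\mathcal O_{\mathfrak p}$ is a discrete valuation ring with residue field $\F_q$, where $q = p^f$ and $f = \ord_{n_0}(p)$, I first decompose $G = G_p \times H$ with $G_p$ the Sylow $p$-subgroup of $G$ and $H$ its $p'$-complement. Because $G_p$ is a $p$-group and the residue characteristic is $p$, the ring $R' := \mathcal O_{\mathfrak p}[G_p]$ is again commutative local with the same residue field $\F_q$. Thus $\mathcal O_{\mathfrak p}[G] \cong R'[H]$, and I invoke the Main Lemma of the paper on the cleanness of $R'[H]$ with $R'$ commutative local and $H$ finite abelian of order coprime to the residue characteristic: the criterion requires, for every $d \mid \exp(H)$, that the cyclotomic polynomial $\Phi_d(x)$ admit equally many monic irreducible factors over $R'$ as over the residue field $\F_q$.

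Next, I compute both counts. Over $\F_q$, standard finite-field theory gives $\varphi(d)/\ord_d(q)$ irreducible factors of $\Phi_d$. Over $R'$, since $p \nmid d$ the polynomial $\Phi_d$ is separable modulo $\mathfrak p$, so its irreducible factors over $K = \Q(\zeta_n)$ are already pairwise coprime in $\mathcal O_{\mathfrak p}[x]$; base-changing to the finite free $\mathcal O_{\mathfrak p}$-algebra $R'$ preserves this decomposition and yields $\varphi(d)\varphi(n)/\varphi(\lcm(n,d))$ factors. Equating the two counts, cleanness is equivalent to
\[
\ord_d(q) \cdot \varphi(n) = \varphi(\lcm(n,d)) \quad \text{for every } d \mid \exp(G) \text{ with } p \nmid d.
\]

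Finally, I translate this into the three stated conditions. Write $d = d_0 d_1$ with $d_0 \mid n_0^{\infty}$ and $\gcd(d_1, n_0) = 1$. By CRT on $(\Z/d\Z)^{\times}$, $\ord_d(q) = \lcm(\ord_{d_0}(q), \ord_{d_1}(q))$, while a direct prime-by-prime computation yields $\varphi(\lcm(n,d))/\varphi(n) = \varphi(d_1) \cdot d_0/\gcd(d_0,n_0)$. The definitions of $n_1$ and $m'$ force $d_0 \mid n_0 m'$ and $d_1 \mid n_1$. Testing the extremes: $d_1 = n_1, \ d_0 = 1$ gives $\ord_{n_1}(p) = \varphi(n_1)$ (after converting from $q = p^f$ back to $p$); $d_0 = n_0 m', \ d_1 = 1$ gives $\ord_{n_0 m'}(p) = m' \ord_{n_0}(p)$ (using $f \mid \ord_{n_0 m'}(p)$); and the joint extreme forces $\gcd(\ord_{n_1}(p), \ord_{n_0 m'}(p)) = 1$ so that the $\lcm$ on the left actually equals the product on the right. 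The main obstacle is the converse direction, namely showing that these three crisp conditions imposed at the extremes suffice to imply the equation for every admissible $d$; this requires careful $p$-adic bookkeeping of orders modulo prime powers, combined with CRT, to propagate primitivity and order-growth downward through divisors. The ``In particular'' assertion is then immediate: if $\exp(G) \mid n$, then the $p'$-part of $\exp(G)$ divides $n_0$, so $n_1 = m' = 1$ and the three conditions hold trivially.
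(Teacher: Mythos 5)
Your overall route coincides with the paper's: reduce cleanness of $\mathcal O_{\mathfrak p}[G]$ to the condition $[K(\zeta_d):K]=\ord_d(N(\mathfrak p))$ (equivalently, equal numbers of monic irreducible factors of each $\Phi_d$ over $\mathcal O_{\mathfrak p}$ and over the residue field) for all divisors $d$ of $\exp(G)$ prime to $p$, use $N(\mathfrak p)=p^{\ord_{n_0}p}$ and $[K(\zeta_d):K]=\varphi(\lcm(n,d))/\varphi(n)$, and then translate into the three order conditions. But there is a genuine gap: the sufficiency direction is not proved. You explicitly defer it as ``the main obstacle'' requiring ``careful $p$-adic bookkeeping'', yet this is roughly half of the actual proof. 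Concretely, for an arbitrary divisor $m$ of the $p'$-part $n_2$ of $\exp(G)$ one must set $m_1=\gcd(m,n_1)$ and $m_2=\lcm(m/m_1,n_0)/n_0$, note that $\ord_{m_1}p=\varphi(m_1)$ (surjectivity of $(\Z/n_1\Z)^{\times}\to(\Z/m_1\Z)^{\times}$), extract $\ord_{n_0m_2}p=m_2\ord_{n_0}p$ from the hypothesis $\ord_{n_0m'}p=m'\ord_{n_0}p$ via the squeeze $\ord_{n_0m'}p\le \frac{m'}{m_2}\ord_{n_0m_2}p\le m'\ord_{n_0}p$, and then use $\gcd(\ord_{m_1}p,\ord_{n_0m_2}p)=1$ to convert the relevant $\lcm$ into a product; only then does the degree identity for $m$ follow. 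None of this is supplied by your sketch.

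A second, repairable, flaw lies in the necessity direction: your ``extreme'' test values $d=n_0m'$ and $d=n_0m'n_1$ need not divide $\exp(G)$ (indeed $n_0m'\mid\exp(G)$ forces $n_0\mid\exp(G)$; e.g.\ $n=n_0=45$, $\exp(G)=27$ gives $m'=3$ and $n_0m'=135\nmid 27$), and the criterion only quantifies over divisors of $\exp(G)$. The legitimate tests are $d=n_1$, $d=n_3:=n_2/n_1$ and $d=n_2$, combined with the identity $\ord_{n_3}(N(\mathfrak p))=\ord_{\lcm(n_0,n_3)}p/\ord_{n_0}p=\ord_{n_0m'}p/\ord_{n_0}p$ (since $\lcm(n_0,n_3)=n_0m'$); with that substitution your inequality arguments do produce the three stated conditions, exactly as in the paper. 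Finally, a smaller point: the detour through $R'=\mathcal O_{\mathfrak p}[G_p]$ and ``counting monic irreducible factors over $R'$'' is both unnecessary (Theorem~\ref{t1} already applies to $\mathcal O_{\mathfrak p}[G]$ with $n$ the $p'$-part of $\exp(G)$) and delicate, since $R'$ is not a domain; moreover the equivalence between the lifting criterion and your ``equal counts'' criterion is itself the content of Theorem~\ref{t2} (UFD property of $\mathcal O_{\mathfrak p}[x]$, Gauss' lemma, separability of $x^{n_2}-1$ modulo $\mathfrak p$), which you invoke rather than establish.
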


Note that if $n=1$ (i.e. $K=\Q$), then $n_0=1$ and $m'=1$. Therefore Theorem \ref{main1} implies the following corollary which is exactly the main result of \cite[Theorem 3.3]{Im-Mc14a}.

\begin{corollary}
Let $G$ be a finite abelian group, let $p$ be a prime number, and  let $n_1$ be the maximal divisor of $\exp(G)$ with  $p\nmid n_1$. Then
$\Z_{(p)}[G]$ is clean
 if and only if $\ord_{n_1}p=\varphi(n_1)$(i.e. $p$ is a primitive root of $n_1$). 
\end{corollary}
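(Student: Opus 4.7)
The plan is to specialize Theorem \ref{main1} to the case $n=1$, in which $K=\Q$, $\mathcal O = \Z$, and any nonzero prime ideal $\mathfrak p \subset \Z$ has the form $p\Z$ for some rational prime $p$; the localization $\mathcal O_{\mathfrak p}$ then coincides with $\Z_{(p)}$. No new ring-theoretic work is needed beyond unwinding the combinatorial parameters $n_0$, $n_1$, $m'$ attached to the general theorem.

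First I would compute these parameters. Since $n=1$, the maximal positive divisor of $n$ coprime to $p$ is $n_0 = 1$, whence the side condition $\gcd(n_1, n_0) = 1$ in the definition of $n_1$ is automatic. Thus $n_1$ reduces to the maximal divisor of $\exp(G)$ coprime to $p$, matching the definition given in the corollary. The quantity $\lcm(\exp(G), n_0)/(n_0 n_1) = \exp(G)/n_1$ is a power of $p$ by the maximality of $n_1$, so its maximal $p$-coprime divisor is $m' = 1$.

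Next I would inspect the three conditions in Theorem \ref{main1} under these values. The condition $\ord_{n_0 m'} p = m' \ord_{n_0} p$ collapses to the trivially true identity $\ord_1 p = \ord_1 p$, and the coprimality condition $\gcd(\ord_{n_1} p, \ord_{n_0 m'} p) = 1$ becomes $\gcd(\ord_{n_1} p, \ord_1 p) = 1$, which is vacuous. Hence only $\ord_{n_1} p = \varphi(n_1)$ survives, which is exactly the statement that $p$ is a primitive root modulo $n_1$. There is no real obstacle in this argument: the entire corollary is a direct specialization of Theorem \ref{main1} once the auxiliary parameters are computed.
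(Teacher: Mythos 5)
Your specialization is correct and is exactly how the paper derives the corollary: setting $n=1$ forces $n_0=1$ and $m'=1$, so the second and third conditions of Theorem \ref{main1} hold trivially and only $\ord_{n_1}p=\varphi(n_1)$ remains. Nothing further is needed.
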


\begin{theorem}\label{t3}
Let $K=\Q(\sqrt{d})$ be a quadratic field for some non-zero square-free integer $d\neq 1$, $\mathcal O$ its rings of integers, $\mathfrak p\subset \mathcal O$ a nonzero prime ideal, and $G$ a finite abelian group.  Let $p$ be the prime with $p\Z=\mathfrak p\cap \Z$, let $\Delta$ be the  discriminant of $K$, and let $n$ be the maximal positive divisor of $\exp(G)$ with $p\nmid n$. 
 \begin{enumerate}
 \item  If $\Delta\nmid n$, then $\mathcal O_{\mathfrak p}[G]$ is clean if and only if one of the following holds
  \begin{enumerate}
  \item $p=2$ is a primitive root of unity of $n$ and $\Delta\not\equiv 5\pmod 8$;

   \item  $p\neq 2$ is a primitive root of unity of $n$ and   $\big(\frac{\Delta}{p}\big)=1$ or $0$;

    \item $n=2$, $p\neq 2$, and   $\big(\frac{\Delta}{p}\big)=-1$, where $\big(\frac{\Delta}{p}\big)$ is the Legendre symbol.
  \end{enumerate}

 \item If $\Delta\t n$ and $d\equiv 2,3\pmod 4$, then $\mathcal O_{\mathfrak p}[G]$ is clean if and only if
 $|d|$ is a prime, $n=4|d|^{\ell}$ with $\ell\in \N$,
  $p\equiv 3\pmod 4$, $\big(\frac{\Delta}{p}\big)=1$, and $\ord_np=\varphi(n)/2$.

 \item If $\Delta\t n$ and $d\equiv 1\pmod 4$, then $\mathcal O_{\mathfrak p}[G]$ is clean if and only if  one of the following holds
  \begin{enumerate}
  \item $|d|$ is a prime,  $n=|d|^{\ell}$ or $2|d|^{\ell}$ for some $\ell\in \N$, and  either $\ord_np=\frac{2\varphi(n)}{3+\big(\frac{d}{p}\big)}$, or $\ord_np=\varphi(n)/2$  with $\big(\frac{d}{p}\big)=-1$ and $d<0$, where $\big(\frac{d}{p}\big)$ is the Legendre symbol.

\item $|d|=q$ is a prime, $n=4q^{\ell}$ with $q\equiv 3\pmod 4$ and $\ell\in \N$, $\big(\frac{d}{p}\big)=1$, 
 $p\equiv 3\pmod 4$, and $\ord_{q^{\ell}}p=q^{\ell-1}(q-1)/2$.

	\item $|d|=q_1$ is a prime, $n=q_1^{\ell_1}q_2^{\ell_2}$ or $2q_1^{\ell_1}q_2^{\ell_2}$ with $q_1\equiv 3\pmod 4$, $\ell_1,\ell_2\in \N$, and $q_2$ is another odd prime,  $\big(\frac{d}{p}\big)=1$, 
	 $p$ is a primitive root of unity of $q_2^{\ell_2}$, $\ord_{q_1^{\ell_1}}p=q_1^{\ell_1-1}(q_1-1)/2$, and $\gcd(q_1^{\ell_1-1}(q_1-1)/2, q_2^{\ell_2-1}(q_2-1))=1$.

  \item $|d|=q_1q_2$ is a product of two distinct primes, $n=q_1^{\ell_1}q_2^{\ell_2}$ or $2q_1^{\ell_1}q_2^{\ell_2}$ with $\ell_1,\ell_2\in \N$,  $\big(\frac{d}{p}\big)=1$,  $p$ is a primitive root of unity of $q_1^{\ell_1}$ and $q_2^{\ell_2}$, and $\gcd(q_1^{\ell_1-1}(q_1-1)/2, q_2^{\ell_2-1}(q_2-1)/2)=1$.

  \end{enumerate}
 \end{enumerate}

\end{theorem}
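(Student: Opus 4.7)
The plan is to apply the same reduction strategy as in Theorem \ref{main1} and then carry out the splitting analysis specific to the quadratic setting. First, writing $G = G_p \times G_{p'}$ with $G_p$ the $p$-Sylow subgroup of $G$ and $\exp(G_{p'}) = n$, the ring $\mathcal{O}_{\mathfrak{p}}[G_p]$ is local (its maximal ideal equals $\mathfrak{p}\mathcal{O}_{\mathfrak{p}}[G_p]$ plus the augmentation ideal of $G_p$, which is nilpotent modulo $\mathfrak{p}$), so by the Main Lemma of the paper cleanness of $\mathcal{O}_{\mathfrak{p}}[G]$ is equivalent to cleanness of $\mathcal{O}_{\mathfrak{p}}[G_{p'}]$. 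Since $p \nmid n$, the cyclotomic polynomials $\Phi_d(X)$ with $d \mid n$ are pairwise coprime in $\mathcal{O}_{\mathfrak{p}}[X]$, and the Chinese Remainder Theorem decomposes $\mathcal{O}_{\mathfrak{p}}[G_{p'}]$ as a finite product of rings of the form $\mathcal{O}_{\mathfrak{p}}[\zeta_d]$, one for each $d \mid n$ (with multiplicities determined by the character theory of $G_{p'}$). Each factor is the semilocal ring obtained by localizing $\mathcal{O}[\zeta_d]$ at the primes above $\mathfrak{p}$, so cleanness is equivalent to: for every $d \mid n$, the prime $\mathfrak{p}$ has a unique prime of $\mathcal{O}[\zeta_d]$ lying above it.

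Next I would translate this into Galois-theoretic data. By Kronecker--Weber, $K \subseteq \mathbb{Q}(\zeta_{|\Delta|})$, so $K(\zeta_d) \subseteq \mathbb{Q}(\zeta_m)$ for $m = \lcm(d,|\Delta|)$. The number of primes of $\mathcal{O}[\zeta_d]$ above $\mathfrak{p}$ equals $[\mathrm{Gal}(K(\zeta_d)/K) : D]$, where $D$ is the image in $\mathrm{Gal}(K(\zeta_d)/K)$ of the decomposition group of any prime of $\mathbb{Q}(\zeta_m)$ above $p$. This decomposition group is cyclic, generated by the Frobenius $\sigma_p \in (\mathbb{Z}/m)^*$, so uniqueness amounts to the restriction of $\sigma_p$ generating the full group $\mathrm{Gal}(K(\zeta_d)/K)$, a condition that can be rephrased explicitly in terms of $\ord_d p$ and the quadratic character $\bigl(\tfrac{\Delta}{p}\bigr)$.

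The trichotomy in the statement reflects whether $\sqrt{d} \in \mathbb{Q}(\zeta_n)$, equivalently whether $\Delta \mid n$. In case (1), $\Delta \nmid n$, so $K(\zeta_n)/\mathbb{Q}(\zeta_n)$ is a genuine quadratic extension; the condition decouples into ``$p$ is primitive modulo $n$'' together with ``$\mathfrak{p}$ is the only prime above $p$ in $\mathcal{O}[\zeta_n]$'', producing subcases (1a)--(1c) according to $p=2$, $p$ odd with $\bigl(\tfrac{\Delta}{p}\bigr) \in \{0,1\}$, or $p$ odd with $\bigl(\tfrac{\Delta}{p}\bigr) = -1$. In cases (2) and (3), $K \subseteq \mathbb{Q}(\zeta_n)$, so $\mathrm{Gal}(\mathbb{Q}(\zeta_n)/K)$ has index $2$ in $(\mathbb{Z}/n)^*$ and the primitivity condition weakens to $\ord_n p = \varphi(n)/2$ (or a related expression). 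The uniqueness requirement must moreover hold at every $d' \mid n$, which is what forces the restrictive shape of $n$ (for instance $n = 4|d|^\ell$ in case (2), and the prime-power or two-prime forms in case (3)); the subcases (3a)--(3d) are then distinguished by whether $|d|$ is prime or a product of two primes and by which additional odd prime $q_2$ divides $n$ beyond $|d|$ and $2$.

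The main obstacle will be the simultaneous bookkeeping in subcases (3c) and (3d), where $n$ carries two distinct odd prime divisors and $K$ sits ``diagonally'' inside $\mathbb{Q}(\zeta_n)$. For sufficiency, one must verify uniqueness at every $d' \mid n$ at once; the stated $\gcd$ conditions on the orders $\ord_{q_i^{\ell_i}} p$ encode precisely the independence of the two prime-power components of the Frobenius inside the index-$2$ subgroup cut out by $K$. For necessity, failure of any one order or $\gcd$ condition must be traced back to an explicit divisor $d'$ at which the factor $\mathcal{O}_{\mathfrak{p}}[\zeta_{d'}]$ decomposes nontrivially, which requires a careful enumeration of the subgroups of $(\mathbb{Z}/n)^*$ through which $\sigma_p$ can project.
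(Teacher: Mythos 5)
Your opening reduction is sound in substance: splitting off the $p$-part, decomposing $\mathcal O_{\mathfrak p}[G_{p'}]$ by CRT into semilocal cyclotomic pieces (the cyclotomic polynomials $\varPhi_d$, $d\mid n$, are pairwise comaximal over $\mathcal O_{\mathfrak p}$ since $p\nmid n$), and using that a commutative clean ring with no nontrivial idempotents is local, you arrive at exactly the criterion the paper records as Theorem \ref{t2}: $\mathcal O_{\mathfrak p}[G]$ is clean iff $[K(\zeta_m):K]=\ord_m N(\mathfrak p)$, i.e.\ iff $\mathfrak p$ has a unique prime above it in $K(\zeta_m)$, for every $m\mid n$ (the paper gets there instead by lifting monic factors of $x^n-1$, Theorem \ref{t1} plus a UFD argument). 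Two caveats on this part. First, there is no ``Main Lemma'' in the paper; the reduction from $G$ to $G_{p'}$ is precisely what Theorem \ref{t1} supplies, and your observation that $\mathcal O_{\mathfrak p}[G_p]$ is local does not by itself give it. Second, when $p\mid\Delta$ (the cases $\big(\frac{\Delta}{p}\big)=0$, and $p=2$ with $\Delta$ even, which the theorem explicitly allows), $p$ ramifies in $\Q(\zeta_m)$ for $m=\lcm(d,|\Delta|)$, so the decomposition group there is \emph{not} cyclic generated by a Frobenius $\sigma_p\in(\Z/m\Z)^{\times}$ (indeed $p$ is not invertible mod $m$); the bookkeeping has to be done in $K(\zeta_d)/K$, which is unramified at $\mathfrak p$, with the Frobenius acting by $x\mapsto x^{N(\mathfrak p)}$, and $N(\mathfrak p)$ read off from Lemma \ref{2.11} -- which is how the paper proceeds.

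The genuine gap is everything after that. The content of Theorem \ref{t3} is not the general criterion (that is already Theorem \ref{t2}) but its explicit translation into conditions (1a)--(1c), (2), (3a)--(3d), and your proposal only describes this step without carrying it out. Necessity requires applying the criterion at well-chosen divisors ($m=4$, $m=|d|^{\ell}$, $m=q_i^{\ell_i}$, $m=n$) together with: Lemma \ref{2.10} to decide when $\sqrt d\in\Q(\zeta_m)$, Lemma \ref{2.11} to decide whether $N(\mathfrak p)=p$ or $p^2$, and the structure of $(\Z/n\Z)^{\times}$ (it must contain an element of order $\varphi(n)$ or $\varphi(n)/2$) to force the admissible shapes of $n$ -- e.g.\ $n=4|d|^{\ell}$ with $|d|$ prime, $p\equiv 3\pmod 4$, $\ord_np=\varphi(n)/2$ in case (2), and the prime-power or two-prime forms with the stated order and gcd conditions in case (3), including the parity arguments that yield $q_1\equiv 3\pmod 4$ and $d<0$ where claimed. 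Sufficiency then requires verifying the degree/order equality for \emph{every} divisor $m$ of $n$ in each subcase. Your text defers precisely this (``careful enumeration'', ``main obstacle''), so as written the proposal establishes only the reduction and not the theorem.
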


In Section 2, we collect some necessary knowledge of the structure of the group of units $(\Z/m\Z)^{\times}$ and field extension. Furthermore, we give some general characterization theorems for clean and $*$-clean group rings. In Section 3, we deal with group rings over local subrings of cyclotomic fields and provide a proof of Theorem \ref{main1}.  In Section 4, we consider group rings over local subrings of quadratic fields and give a proof of Theorem \ref{t3}.

\bigskip
\section{Preliminaries}
\medskip

For a finite abelian group $G$, we denote by $\exp(G)$ the exponent of $G$. 
We denote by $\N$ the set of all positive integers and $\N_0=\N\cup\{0\}$.
For  $n\in \N$,  we denote by $\varphi(n)$ the Euler function.
Let $n\in \N$ and let $n=p_1^{k_1}\ldots p_s^{k_s}$ be its prime factorization, where $s, k_1,\ldots, k_s\in \N$ and $p_1,\ldots, p_s$ are pair-wise distinct primes. It is well-known that
 \begin{align*}
 &\varphi(n)=\prod_{i=1}^s\varphi(p_i^{k_i})=\prod_{i=1}^sp_i^{k_i-1}(p_i-1)\\
\text{ and }\quad &(\Z/n\Z)^{\times}\cong (\Z/p_1^{k_1}\Z)^{\times}\times\ldots\time(\Z/p_s^{k_s}\Z)^{\times}\,.
\end{align*}
Furthermore,
\begin{align*}
&(\Z/p_i^k\Z)^{\times}\cong C_{p_i^{k-1}(p_i-1)}& \quad &\text{ where } p_i\ge 3,\\
&(\Z/2^{\ell}\Z)^{\times}=\langle-1\rangle\times\langle5\rangle\cong C_2\oplus\C_{2^{\ell-2}} &\quad &\text{ where }\ell\ge 3,\\
\text{ and }\quad &(\Z/4\Z)^{\times}\cong C_2\,.&&
\end{align*}

For every $m\in \N$ with $\gcd(m,n)=1$, we denote by $\ord_nm=\ord_{(\Z/n\Z)^{\times}}m$ the multiplicative order of $m$ modulo $n$.
If $\ord_nm=\varphi(n)$, we say $m$ is a primitive root of $n$ and $n$ has a primitive root if and only if $n=2$, $4$, $q^{\ell}$, or $2q^{\ell}$, where $q$ is an odd prime and $\ell\in \N$. Let $n_1\in \N$ be another integer with $\gcd(n_1,m)=1$. Then
\begin{align*}
&\ord_nm\le \ord_{nn_1}m\le n_1\ord_nm\,,\\
\text{ and }\quad &\lcm(\ord_nm,\ord_{n_1}m)=\ord_{\lcm(n,n_1)}m\,.
\end{align*}

Let $\zeta_n$ be an $n$th primitive root of unity over $\Q$. Then $[\Q(\zeta_n):\Q]=\varphi(n)$. Let $m$ be another positive integer. Then
\begin{align*}
&\Q(\zeta_n)\cap \Q(\zeta_m)=\Q(\zeta_{\gcd(n,m)})\\
\text{ and }\quad  &\Q(\zeta_n)(\zeta_m)=\Q(\zeta_{\lcm(n,m)})\,.
\end{align*}

\medskip

Let $(R, \mathfrak m)$ be  a commutative local ring and we denote $\overline{R}=R/\mathfrak m$. Then $\overline{R}$ is a field and we denote by $\ch \overline{R}$ the characteristic  of $\overline{R}$. For any polynomial $f(x)=a_nx^n+\ldots+a_0\in R[x]$, we denote $\overline{f(x)}=\overline{a_n}x^n+\ldots+\overline{a_0}\in \overline{R}[x]$, where $\overline{a_i}=a_i+\mathfrak m$ for all $i\in \{0,\ldots,n\}$.

Let $R$ be a ring $R$  and let $G$ be  a  multiplicative group. Then the group ring of $G$ over $R$ is the ring $R[G]$ of all formal sums
\[
\alpha=\sum_{g\in G}\alpha_gg\,,
\]
where $\alpha_g\in R$ and the support of $\alpha$, $\supp(\alpha)=\{g\in G\mid \alpha_g\neq 0\}$, is finite. Addition is defined componentwise and multiplication is defined by the following way: for $\alpha, \beta\in R[G]$,
\[
\alpha\beta=\left(\sum_{g\in G}\alpha_gg\right)\left(\sum_{h\in G}\beta_hh\right)=\sum_{g, h\in G}\alpha_g\beta_h(gh)\,.
\]
For more information on the group ring, we refer \cite{MS} as a reference. 

\begin{theorem}\label{t1}

Let $(R, \mathfrak m)$ be a  commutative noetherian local ring with $\ch \overline{R}=p\ge 0$, let $G$ be a finite abelian group, and let $n$ be the maximal divisor of $\exp(G)$ with $p\nmid n$. Then $R[G]$ is clean if and only if  each monic factor of $x^n-1$ in $\overline{R}[x]$ can be lifted to a monic factor of $x^n-1$ in $R[x]$.
\end{theorem}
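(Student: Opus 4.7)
The plan is a three-step reduction. First, I would separate the $p$-part of $G$: by the Sylow decomposition for abelian groups, write $G = H \times P$ with $\exp(H) = n$ and $P$ the $p$-primary component, so that $R[G] \cong R[P][H]$. Set $S := R[P]$; since $P$ is a $p$-group and $\ch \overline R = p$, the augmentation ideal of $\overline R[P]$ is nilpotent, so $S$ is commutative noetherian local with residue field $\overline R$. Moreover, the augmentation $\varepsilon : S \to R$ extends coefficientwise to $\varepsilon_\ast : S[x] \to R[x]$, fixes $x^n - 1$, and commutes with reduction modulo $\mathfrak m$. Consequently, if $\overline f$ is a monic factor of $x^n - 1$ in $\overline R[x]$ and $F \in S[x]$ is a monic lift dividing $x^n - 1$, then $\varepsilon_\ast(F) \in R[x]$ is a monic lift of $\overline f$ dividing $x^n - 1$. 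Hence the factorization condition is equivalent over $R$ and over $S$, and the theorem reduces to $S[H]$ with $\gcd(|H|, p) = 1$.

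Second, I would translate cleanness into idempotent lifting. Because $\gcd(|H|, p) = 1$, Maschke's theorem makes $\overline R[H]$ a commutative semisimple ring; together with $\mathfrak m S[H] \subseteq J(S[H])$ (which holds since $S[H]$ is a finitely generated $S$-module), this gives $J(S[H]) = \mathfrak m S[H]$ and $S[H]/J(S[H]) \cong \overline R[H]$. Since $S[H]$ is commutative, cleanness is equivalent to the exchange property; by Nicholson's theorem, $S[H]$ is exchange if and only if every idempotent of $\overline R[H]$ lifts to $S[H]$.

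Third (the core), I would match idempotent lifting with polynomial factorization. For $H = C_n$ this is immediate from $S[C_n] = S[x]/(x^n - 1)$: by the Chinese Remainder Theorem, primitive idempotents of $\overline R[x]/(x^n - 1)$ correspond to irreducible factors of $x^n - 1$ over $\overline R$, and lifting these idempotents is the same as lifting the factorization to $S[x]$. For general abelian $H$ with $\exp(H) = n$, the forward direction follows from the surjection $H \twoheadrightarrow C_n$ given by the invariant-factor decomposition $H = C_n \times K$, since cleanness descends to quotients. For the converse I would induct on $|G|$: the cyclic case splits $S[C_n] = \prod_i A_i$ with each $A_i = S[x]/(f_i)$ local, whence $S[H] \cong \prod_i A_i[K]$ with $|K| < |H|$, and I would apply the theorem recursively to each $A_i[K]$. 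The main obstacle is verifying the inductive hypothesis for $A_i[K]$, namely that every monic factor of $x^{\exp K} - 1$ over the residue field $F_i$ of $A_i$ lifts to $A_i[x]$. For this I would invoke a uniqueness-of-coprime-lifting lemma over a noetherian local ring (obtained by completing $R$ and applying Hensel together with the injection $R \hookrightarrow \widehat R$), which forces the lifted factorization of $x^n - 1$ in $R[x]$ to refine the cyclotomic factorization $\prod_{d \mid n} \Phi_d(x)$. Consequently each $f_i$ divides some $\Phi_{d_i}$, so $A_i$ contains a primitive $d_i$-th root of unity; combined with the description of Galois orbits of $\mu_n$ over $F_i$, this yields the required lifts in $A_i[x]$.
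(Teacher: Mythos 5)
Your first two reductions are sound: splitting off the $p$-part via $R[G]\cong S[H]$ with $S=R[P]$ and transporting the lifting condition along the augmentation works, as does the translation of cleanness of $S[H]$ into lifting idempotents modulo $J(S[H])=\mathfrak m_S S[H]$ (Nicholson plus Maschke), and the cyclic case and the forward direction are fine. The genuine gap is the final step of your induction: to apply the inductive hypothesis to $A_i[K]$ you must show that every monic factor of $x^{\exp(K)}-1$ over the residue field $F_i$ of $A_i=S[x]/(f_i)$ lifts to a monic factor in $A_i[x]$, and the ingredients you cite do not produce these lifts. Your Hensel-uniqueness/completion argument (legitimate, using Krull intersection for the injection into the completion) does give $f_i\mid \Phi_{d_i}$, hence a root of unity $\theta\in A_i$ of exact order $d_i$; but this only lifts those irreducible factors of $x^{\exp(K)}-1$ over $F_i$ whose roots are powers of $\bar\theta$. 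An irreducible factor whose roots have order $e$ with $\lcm(e,d_i)>d_i$ (these occur as soon as $\exp(K)$ has divisors not dividing $d_i$, e.g.\ $d_i$ a power of one prime and $\exp(K)$ divisible by another) requires a monic divisor of $x^{e}-1$ in $A_i[x]$ of degree $[F_i(\bar\xi):F_i]$; knowing the Galois orbits of $\mu_n$ over $F_i$ only identifies its reduction, and gives no reason why the corresponding Hensel factor over $\widehat{A_i}$ has coefficients in $A_i$. The hypothesis places lifted roots of unity in the \emph{other} local factors $A_j$ of $S[C_n]$, not in $S$ or in $A_i$, so they are not available inside $A_i$; making them available is a genuine descent problem (e.g.\ descending $\prod_\sigma(x-\xi^{c_\sigma})$ from $B=S[y]/(g)$, $g$ the lifted minimal polynomial of a primitive $\lcm(d_i,e)$-th root of unity, along a lifted Galois action), which the proposal does not address. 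The transfer statement is true, but the only cheap way to see that is to quote the theorem itself for the quotient $A_i[K]$ of $S[H]$, which is circular inside your induction; this transfer is precisely the hard content of Woods' theorem, which the paper does not reprove but simply cites (together with Immormino--McGovern) as its entire proof.

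A repair that avoids the induction altogether: for each Galois orbit $[\chi]$ of characters of $H$, let $d$ be the order of $\chi(H)$ and choose the local factor $A_{j(\chi)}$ of $S[C_n]\cong\prod_j A_j$ whose residue field is $k(\chi(H))$; its distinguished root of unity has exact order $d$, so $\chi$ lifts to a group homomorphism $H\to A_{j(\chi)}^{\times}$, and these assemble into an $S$-algebra map $\Psi\colon S[H]\to\prod_{[\chi]}A_{j(\chi)}$. Modulo $\mathfrak m_S$, $\Psi$ is the Artin--Wedderburn isomorphism of $k[H]$, so $\Psi$ is surjective by Nakayama; since both sides are free $S$-modules of rank $|H|$, a surjection between them is injective, so $\Psi$ is an isomorphism. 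Thus the lifting hypothesis makes $S[H]$ a finite product of (noetherian) local rings, hence clean, which is exactly what your problematic inductive step was meant to deliver.
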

\begin{proof}
This  follows from  \cite[Proposition 2.1]{Im-Mc14a} and \cite[Theorem 5.8]{Wo74a}.
\end{proof}

\medskip
Let $K$ be an algebraic number field, $\mathcal O$ its rings of integers, and $\mathfrak p\subset \mathcal O$ a nonzero prime ideal. Then there exists a prime $p$ such that $\mathfrak p\cap \Z=p\Z$ and the localization $\mathcal O_{\mathfrak p}$ is a discrete valuation ring, which implies that $\mathcal O_{\mathfrak p}[x]$ is a UFD (Unique factorization domain).
Furthermore, the norm $N(\mathfrak p)=|\mathcal O/\mathfrak p|=|\overline{\mathcal O_{\mathfrak p}}|$ is a prime power of $p$.

Let $\F_q$ be a finite field, where $q$ is a power of some prime $p$ and let $\overline{\zeta_n}$ be the $n$th primitive root of unity over $\F_p$ with $\gcd(n,q)=1$. Then $[\F_q(\overline{\zeta_n}):\F_q]=\ord_nq$.
Let $F$ be an arbitrary field and let $f(x)$ be a polynomial of $F[x]$. If $\alpha$ is a root of $f(x)$, then $[F(\alpha):F]=\deg(f(x))$ if and only if $f(x)$ is irreducible in $F[x]$.

\begin{theorem}\label{t2}
Let $K$ be an algebraic number field, $\mathcal O$ its rings of integers, $\mathfrak p\subset \mathcal O$ a nonzero prime ideal, and $G$ a finite abelian group. Then the group ring $\mathcal O_{\mathfrak p}[G]$ is clean if and only if $[K[\zeta_m]\colon K]=\ord_m( N(\mathfrak p))$ for every positive divisor $m$ of $\exp(G)$ with $p\nmid m$,
 where  $\zeta_m$ is  an  $m$th primitive root of unity over $\Q$.
\end{theorem}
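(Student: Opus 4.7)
The plan is to apply Theorem~\ref{t1} with $R = \mathcal O_{\mathfrak p}$ and translate the polynomial lifting criterion into the numerical degree condition of the theorem. Note that $\mathcal O_{\mathfrak p}$ is a discrete valuation ring, hence a noetherian local UFD, with residue field $\F_q$ where $q = N(\mathfrak p)$ and $\ch \F_q = p$. Writing $n$ for the maximal divisor of $\exp(G)$ coprime to $p$, Theorem~\ref{t1} reduces the problem to showing that every monic factor of $x^n - 1$ in $\F_q[x]$ lifts to a monic factor of $x^n - 1$ in $\mathcal O_{\mathfrak p}[x]$ if and only if $[K(\zeta_m):K] = \ord_m q$ for every positive divisor $m$ of $n$ (equivalently, for every divisor $m$ of $\exp(G)$ with $p \nmid m$).

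For the numerology on both sides, I would use the standard decomposition $x^n - 1 = \prod_{m \mid n} \Phi_m(x)$, where $\Phi_m$ denotes the $m$th cyclotomic polynomial. Over $\mathcal O_{\mathfrak p}[x]$, Gauss's lemma together with the fact that $K(\zeta_m)/K$ is Galois with group a subgroup of $(\Z/m\Z)^{\times}$ shows that $\Phi_m(x)$ factors into monic irreducibles all of the same degree $[K(\zeta_m):K]$. Over $\F_q[x]$, the preliminaries give that the irreducible factors of $\overline{\Phi_m(x)}$ all have degree $\ord_m q = [\F_q(\overline{\zeta_m}):\F_q]$. Moreover, since $p \nmid n$, the polynomial $x^n - 1$ is separable modulo $\mathfrak p$, so the reductions of distinct monic irreducible factors of $x^n - 1$ in $\mathcal O_{\mathfrak p}[x]$ are pairwise coprime in $\F_q[x]$.

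The crux is then to prove the equivalence: \emph{the lifting condition holds if and only if every monic irreducible factor $f$ of $x^n - 1$ in $\mathcal O_{\mathfrak p}[x]$ has $\overline{f}$ still irreducible in $\F_q[x]$.} If every such $\overline{f}$ is irreducible, then the irreducible factorizations of $x^n - 1$ on the two sides are in natural bijection, so any monic factor in $\F_q[x]$ is a subset product of these atoms and lifts to the corresponding subset product in $\mathcal O_{\mathfrak p}[x]$. Conversely, if some $\overline{f} = g_1 g_2$ with $g_1$ a proper irreducible factor in $\F_q[x]$, then any putative lift $h$ of $g_1$ would be a subset product of the irreducibles $\{f_i\}$ of $x^n - 1$ in $\mathcal O_{\mathfrak p}[x]$; including $f$ in the subset forces $\deg \overline{h} \ge \deg \overline{f} > \deg g_1$, whereas excluding $f$ makes $\overline{h}$ coprime to $\overline{f}$ and hence coprime to $g_1$, so in either case $\overline{h} \ne g_1$.

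Combining the degree computations with this equivalence, the lifting condition for all $m \mid n$ becomes $[K(\zeta_m):K] = \ord_m q$ for all such $m$, which is the stated criterion. The main obstacle I anticipate is carefully justifying the ``atoms suffice'' step in the equivalence above; this hinges on the separability of $x^n - 1$ modulo $\mathfrak p$, which yields the pairwise coprimeness of the reductions and prevents any mixing between the irreducible factors arising from different $\Phi_m$'s or within a single $\Phi_m$.
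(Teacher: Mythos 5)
Your proposal is correct and follows essentially the same route as the paper: reduce via Theorem~\ref{t1} to lifting monic factors of $x^n-1$, factor $x^n-1=\prod_{m\mid n}\varPhi_m(x)$ into monic irreducibles over the UFD $\mathcal O_{\mathfrak p}[x]$ (Gauss), and use separability of $x^n-1$ modulo $\mathfrak p$ to show lifting is equivalent to each reduction $\overline{f}$ staying irreducible, which translates into $[K(\zeta_m):K]=\ord_m N(\mathfrak p)$ for all $m\mid n$. Your contrapositive argument for the failure of lifting (subset products either contain $f$, forcing too large a degree, or are coprime to $\overline{f}$) is just a rephrasing of the paper's $\overline{h}^2\mid x^n-1$ contradiction, so no substantive difference.
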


\begin{proof}
Let $n$ be the maximal divisor of $\exp(G)$ with $p\nmid \exp(G)$. Since $\mathcal O_{\mathfrak p}[x]$ is a UFD, we suppose that
 $x^n-1=f_1(x)\cdot \ldots\cdot f_s(x)$ , where $s\in \N$ and $f_1(x), \ldots, f_s(x)$ are monic irreducible polynomials in $\mathcal O_{\mathfrak p}[x]$.
 Then the Generalized Gauss' Primitive Polynomial Lemma implies that  $f_1(x), \ldots, f_s(x)$ are also monic irreducible polynomials in $K[x]$. For every positive divisor $m$ of $n$,  let $\varPhi_m(x)$ be the $m$th cyclotomic polynomial. Then $\varPhi_m(x)\in \Z[x]\subset \mathcal O_{\mathfrak p}[x]$ and $$x^n-1=\prod_{1<m\t n}\varPhi_m(x)=f_1(x)\cdot \ldots\cdot f_s(x)\,.$$
\medskip

1. Suppose $\mathcal O_{\mathfrak p}[G]$ is clean. Let $f(x)$ be a monic irreducible factor of $x^n-1$ in $\mathcal O_{\mathfrak p}[x]$ and let $\mathfrak h(x)$ be a monic irreducible factor of $\overline{f(x)}$ in $\overline {\mathcal O_{\mathfrak p}}[x]$. By Theorem \ref{t1}, there exists a monic irreducible factor $h(x)$ of $x^n-1$ in $\mathcal O_{\mathfrak p}[x]$ such that $\overline{h(x)}= \mathfrak h(x)$. If $h(x)\neq f(x)$, it follows by $\mathcal O_{\mathfrak p}[x]$ is a UFD that $f(x)h(x)$ is a monic factor of $x^n-1$ in $\mathcal O_{\mathfrak p}[x]$ and hence ${\overline{h(x)}}^2$ is a monic factor of $x^n-1$ in $\overline {\mathcal O_{\mathfrak p}}[x]$. Since $\gcd(n,p)=1$, we obtain $x^n-1\in \overline{\mathcal O_{\mathfrak p}}[x]$ has no multiple root in any extension of $\overline{\mathcal O_{\mathfrak p}}$, a contradiction. Therefore $h(x)=f(x)$ and hence $\overline{f(x)}=\overline{h(x)}= \mathfrak h(x)$ is irreducible in $\overline{\mathcal O_{\mathfrak p}}[x]$.

 Let $m$ be a positive divisor of $n$. It  follows from the fact that $\mathcal O_{\mathfrak p}[x]$ is a UFD that
 there exists $i\in [1,s]$ such that $f_i(x)$ divides  $\varPhi_m(x)$ in $\mathcal O_{\mathfrak p}[x]$.
Thus every root of $f_i(x)$ is a $m$th primitive root of unity in $K$ and hence $[K(\zeta_m):K]=\deg(f_i(x))=\deg(\overline{f_i(x)})$.  Since  $\overline{f_i(x)}$ is irreducible in $\overline{\mathcal O_{\mathfrak p}}[x]$ and every root of $\overline{f_i(x)}$ is a $m$th primitive root of unity in $\overline{\mathcal O_{\mathfrak p}}$, we have $\deg(\overline{f_i(x)})=[\overline {\mathcal O_{\mathfrak p}}(\overline{\zeta}_m):\overline{\mathcal O_{\mathfrak p}}]=\ord_m N(\mathfrak p)$, where $\overline{\zeta}_m$ is a $m$th primitive root of unity over $\F_p$.
Therefore $[K(\zeta_m):K]=\ord_m N(\mathfrak p)$.

\medskip
2. Conversely, suppose $[K(\zeta_m):K]=\ord_m N(\mathfrak p)$ for every divisor $m$ of $n$.
 Let $i\in [1,s]$. Then $f_i(x)$ is a factor of some $m$th cyclotomic polynomial $\varPhi_m(x)$ with $m\t n$. Since $f_i(x)$ is irreducible in $K[x]$, we have $\deg(f_i(x))=[K(\zeta_m):K]$ and hence
 $$\deg(\overline{f_i(x)})=\deg(f_i(x))=[K(\zeta_m):K]=\ord_m N(\mathfrak p)=|\overline {\mathcal O_{\mathfrak p}}(\overline{\zeta}_m):\overline{\mathcal O_{\mathfrak p}}|\,.$$
 Therefore  $\overline{f_i(x)}$ is irreducible in $\overline{\mathcal O_{\mathfrak p}}[x]$ and
 $$x^n-1=\overline{f_1(x)}\cdot \ldots\cdot \overline{f_s(x)}\in \overline {\mathcal O_{\mathfrak p}}[x]\,.$$
Let $\mathfrak h(x)$ be a monic  factor of $x^n-1\in \overline {\mathcal O_{\mathfrak p}}[x]$. Since $\overline {\mathcal O_{\mathfrak p}}[x]$ is a UFD, there exists  a subset $I\subset [1,s]$ such that $\mathfrak h(x)=\prod_{i\in I}\overline{f_i(x)}$ and hence $\overline{\prod_{i\in I}f_i(x)}=\mathfrak h(x)$. Therefore every monoic factor of $x^n-1\in \overline{\mathcal O_{\mathfrak p}}[x]$ can be lifted to a monioc factor of $x^n-1\in \mathcal O_{\mathfrak p}[x]$. It follows from   Theorem \ref{t1} that $\mathcal O_{\mathfrak p}[G]$ is clean.
\end{proof}

\medskip
A ring $R$ is called a $*$-ring if there exists an operation $*: R\rightarrow R$ such that $(x+y)^*=x^*+y^*$, $(xy)^*=x^*y^*$, and $(x^*)^*=x$ for all $x,y\in R$. An element $p\in R$ is said to be a projection if $p^*=p=p^2$ and a $*$-ring $R$ is said to be a $*$-clean ring if every element of $R$ is the sum of a unit and a projection. A commutative $*$-ring is $*$-clean if and only if it is clean and every idempotent is a projection (\cite[Theorem 2.2]{Li-Zhou11}).
Let $G$ be an abelian group.
 With the classical involution
\begin{align*}
* :R[G]&\rightarrow R[G], \mbox{given by }\\
(\sum a_gg)^*&= \sum a_gg^{-1}\,,
%\sum a_gg&\hookrightarrow \sum a_gg^{-1}\,,
\end{align*}
the group ring $R[G]$ is a $*$-ring. The question of when a group ring $R[G]$ is $*$-clean has been recently studied by several authors and many interesting results were obtained (see, for examples, \cite{Gao-Ch-Li15,
Ha-Re-Zh17a, Huang-Li-Tang16, Huang-Li-Yuan16, Li-Zhou11, Li-Par-Yuan15},  for some recent developments). Next we provide a characterization for $\mathcal O_{\mathfrak p}[G]$ to be  $*$-clean.

\begin{theorem}\label{t6}
Let $K$ be an algebraic number field, $\mathcal O$ its rings of integers, $\mathfrak p\subset \mathcal O$ a nonzero prime ideal with $p\Z=\mathfrak p\cap \Z$, and $G$ a finite abelian group with $p\nmid \exp(G)$.
If the group ring $\mathcal O_{\mathfrak p}[G]$ is clean,
then  $\mathcal O_{\mathfrak p}[G]$ is $*$-clean if and only if $K[G]$ is $*$-clean.
\end{theorem}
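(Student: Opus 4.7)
The plan is to reduce the two $*$-clean conditions to statements about idempotents, and then to show that the idempotent sets of $\mathcal O_{\mathfrak p}[G]$ and $K[G]$ coincide.

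First I would observe that $K[G]$ is automatically clean: since $\operatorname{char} K = 0$, Maschke's theorem writes $K[G]$ as a finite product of fields, which is clean. By the criterion \cite[Theorem 2.2]{Li-Zhou11} for commutative $*$-clean rings cited above, each of $\mathcal O_{\mathfrak p}[G]$ and $K[G]$ is $*$-clean if and only if it is clean and every idempotent is a projection (i.e., satisfies $e^*=e$). Since $\mathcal O_{\mathfrak p}[G]$ is clean by hypothesis and $K[G]$ is always clean, the theorem reduces to the assertion that every idempotent of $\mathcal O_{\mathfrak p}[G]$ is a projection if and only if every idempotent of $K[G]$ is a projection.

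The main step is to show that the two idempotent sets are in fact equal. One inclusion is trivial from $\mathcal O_{\mathfrak p}[G]\subset K[G]$. For the reverse, let $n=\exp(G)$ and $L=K(\zeta_n)$; over $L$ the group ring splits completely as $L[G]=\bigoplus_{\chi\in\hat G} L\cdot e_\chi$ with the classical idempotents $e_\chi=\frac{1}{|G|}\sum_{g\in G}\chi(g^{-1})g$, and the primitive idempotents of $K[G]$ are the orbit sums $\epsilon_O=\sum_{\chi\in O}e_\chi$ indexed by the orbits $O$ of $\operatorname{Gal}(L/K)$ acting on $\hat G$. The coefficient of $g$ in $\epsilon_O$ is $\frac{1}{|G|}\sum_{\chi\in O}\chi(g^{-1})$; its numerator is a Galois-invariant sum of roots of unity, hence an algebraic integer in $K$ and so an element of $\mathcal O\subset\mathcal O_{\mathfrak p}$, while the denominator $|G|$ is a unit of $\mathcal O_{\mathfrak p}$ because $p\nmid \exp(G)$ forces $p\nmid|G|$. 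Therefore $\epsilon_O\in\mathcal O_{\mathfrak p}[G]$, and since every idempotent of $K[G]$ is a sum of some $\epsilon_O$'s, it lies in $\mathcal O_{\mathfrak p}[G]$.

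To conclude, the involution $*$ on $K[G]$ restricts to the involution $*$ on $\mathcal O_{\mathfrak p}[G]$ (both send $\sum a_gg$ to $\sum a_gg^{-1}$), so an idempotent is a projection in one ring if and only if it is a projection in the other; combined with the reduction of the first paragraph, this gives both directions. I expect the only delicate step to be the explicit description of the primitive idempotents of $K[G]$ as orbit sums together with the verification that their coefficients lie in $\mathcal O$; the rest is essentially bookkeeping.
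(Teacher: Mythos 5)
Your argument is correct, and its overall architecture coincides with the paper's: both reduce, via the cited criterion of Li--Zhou, to the statement that (given cleanness) all idempotents are projections, dispose of the easy direction by the inclusion $\mathcal O_{\mathfrak p}[G]\subset K[G]$, and hinge the hard direction on showing that every idempotent of $K[G]$ already lies in $\mathcal O_{\mathfrak p}[G]$. The difference is how that key containment is established. The paper outsources it: it passes to a prime ideal $I$ of the ring of integers of $K(\zeta_{\exp(G)})$ lying over $\mathfrak p$ and invokes two results of Han--Ren--Zhang (the complete family of orthogonal idempotents of $K(\zeta_{\exp(G)})[G]$ lying in the localization at $I$, and their Lemma 4.3) together with the identification $(\mathcal O_{K(\zeta_{\exp(G)})})_I[G]\cap K[G]=\mathcal O_{\mathfrak p}[G]$. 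You instead prove the containment directly: every idempotent of $K[G]$ is a sum of Galois-orbit idempotents $\epsilon_O=\sum_{\chi\in O}e_\chi$, whose coefficients are Galois-invariant algebraic integers of $K(\zeta_{\exp(G)})$, hence lie in $\mathcal O$, divided by $|G|$, which is a unit of $\mathcal O_{\mathfrak p}$ because $p\nmid\exp(G)$ (and hence $p\nmid|G|$ for abelian $G$). This is self-contained, makes transparent exactly where the hypothesis $p\nmid\exp(G)$ enters, and also yields the cleanness of $K[G]$ for free from the Wedderburn decomposition; the paper's route is shorter on the page but leans on external lemmas and on the intersection identity, which itself requires a small justification. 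All the steps you flag as delicate (the orbit-sum description of the idempotents of $K[G]$ and the integrality of their numerators) are standard and go through, so your proof is complete.
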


\begin{proof}
Let $\mathcal O_{\mathfrak p}[G]$ be clean. Suppose $K[G]$ is $*$-clean. Since every idempotent of $\mathcal O_{\mathfrak p}[G]$ is an idempotent of $K[G]$, thus every idempotent of $\mathcal O_{\mathfrak p}[G]$ is a projective. It follows that  $\mathcal O_{\mathfrak p}[G]$ is $*$-clean.

Suppose $\mathcal O_{\mathfrak p}[G]$ is $*$-clean. Let $\mathcal O_{K(\zeta_{\exp(G)})/\Q}$ be the ring of integers of $K(\zeta_{\exp(G)})$ and let  $I$ be  a prime ideal of $\mathcal O_{K(\zeta_{\exp(G)})/\Q}$ with $I\cap \mathcal O=\mathfrak p$.
By \cite[The beginning of Section 5]{Ha-Re-Zh17a} and $p\nmid \exp(G)$, there is a complete family of orthogonal idempotents of $K(\zeta_{\exp(G)})[G]$ which lies in $(\mathcal O_{K(\zeta_{\exp(G)})/\Q})_I[G]$. It follows from   \cite[Lemma 4.3]{Ha-Re-Zh17a} that every idempotent of $K[G]$ lies in $(\mathcal O_{K(\zeta_{\exp(G)})/\Q})_I[G]\cap K[G]=\mathcal O_{\mathfrak p}[G]$. Since every idempotent of $\mathcal O_{\mathfrak p}[G]$ is a projection, we obtain every idempotent of $K[G]$ is a projection. Note that $K[G]$ is clean. Thus $K[G]$ is $*$-clean.
\end{proof}

\bigskip
\section{Group rings over local subrings of cyclotomic fields}
\medskip

In this section, we investigate when a group ring over a local subring of a cyclotomic field is clean and provide a proof for our main theorem \ref{main1}. We also characterize when such a group ring is $*$-clean. We start with  the following  lemma which we will use without further mention.

\begin{lemma}\label{l1}
Let $K=\Q(\zeta_n)$ be a cyclotomic field for some $n\in \N$, $\mathcal O=\Z[\zeta_n]$ its rings of integers, and $\mathfrak p\subset \mathcal O$ a nonzero prime ideal with $\mathfrak p\cap \Z=p\Z$ for some prime $p$. Suppose  $n=p^un_0$ with $p\nmid n_0$. Then $ N(\mathfrak p)=p^{\ord_{n_0}p}$.
\end{lemma}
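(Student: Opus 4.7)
The plan is to compute the residue field $\overline{\mathcal O_{\mathfrak p}}=\mathcal O/\mathfrak p$ directly; since the paper already notes that $N(\mathfrak p)=|\mathcal O/\mathfrak p|$ is a prime power of $p$, it suffices to identify the inertia degree $f=[\mathcal O/\mathfrak p:\F_p]$.

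First I would use the identity $\Q(\zeta_n)=\Q(\zeta_{p^u})(\zeta_{n_0})$ from the preliminaries (taking $\lcm(p^u,n_0)=n$) to write $\mathcal O=\Z[\zeta_{p^u},\zeta_{n_0}]$, and examine the images of the two generators modulo~$\mathfrak p$. Because $\mathcal O/\mathfrak p$ has characteristic~$p$, the image $\overline{\zeta_{p^u}}$ satisfies $x^{p^u}-1=(x-1)^{p^u}$, so it is forced to equal~$1$. Consequently $\mathcal O/\mathfrak p=\F_p[\overline{\zeta_{n_0}}]$, and the computation is reduced to determining the degree of $\overline{\zeta_{n_0}}$ over $\F_p$.

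Second, I would verify that $\overline{\zeta_{n_0}}$ is a \emph{primitive} $n_0$-th root of unity, which reduces to checking $\overline{\zeta_{n_0}}^{n_0/q}\neq 1$ for every prime $q\mid n_0$. For such a $q$ one has $q\neq p$, so $q$ is a unit in the discrete valuation ring $\mathcal O_{\mathfrak p}$. The standard cyclotomic factorization $\prod_{\zeta^q=1,\,\zeta\neq 1}(1-\zeta)=q$ applied to the primitive $q$-th root of unity $\zeta_{n_0}^{n_0/q}$ shows that $1-\zeta_{n_0}^{n_0/q}$ divides $q$ in $\mathcal O$, hence is a unit in $\mathcal O_{\mathfrak p}$ and does not lie in $\mathfrak p$. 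Thus $\overline{\zeta_{n_0}}$ has order exactly $n_0$ in $\mathcal O/\mathfrak p$.

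Finally, invoking the preliminary fact $[\F_p(\overline{\zeta_{n_0}}):\F_p]=\ord_{n_0}p$ yields $|\mathcal O/\mathfrak p|=p^{\ord_{n_0}p}$, which is $N(\mathfrak p)$. The main obstacle I anticipate is the second step — ensuring that reducing modulo $\mathfrak p$ does not collapse $\zeta_{n_0}$ to a lower-order root of unity — but the elementary unit argument via $1-\zeta_{n_0}^{n_0/q}\mid q$ closes this gap without needing to invoke heavier machinery about the splitting of~$p$ in $\Z[\zeta_{n_0}]$.
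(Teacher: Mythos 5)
Your proof is correct, but it takes a genuinely different route from the paper: the paper disposes of this lemma by citing standard decomposition theory for primes in cyclotomic fields (Fr\"ohlich--Taylor, VI.1.12 and VI.1.15), which hands over the residue degree of $\mathfrak p$ above $p$ as $\ord_{n_0}p$, whereas you compute the residue field $\mathcal O/\mathfrak p$ directly. Your two key steps are both sound: writing $\mathcal O=\Z[\zeta_{p^u},\zeta_{n_0}]$ and noting that in characteristic $p$ the relation $x^{p^u}-1=(x-1)^{p^u}$ forces $\overline{\zeta_{p^u}}=1$, so $\mathcal O/\mathfrak p=\F_p[\overline{\zeta_{n_0}}]$; and checking primitivity of $\overline{\zeta_{n_0}}$ via the identity $\prod_{j=1}^{q-1}\bigl(1-\zeta_q^j\bigr)=q$, which shows $1-\zeta_{n_0}^{n_0/q}$ divides $q\notin\mathfrak p$ for every prime $q\mid n_0$ and hence cannot lie in $\mathfrak p$. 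Combined with the preliminary fact $[\F_p(\overline{\zeta_{n_0}}):\F_p]=\ord_{n_0}p$, this gives $N(\mathfrak p)=|\mathcal O/\mathfrak p|=p^{\ord_{n_0}p}$ as required. What your approach buys is self-containedness: it uses only elementary cyclotomic identities and facts already recorded in the paper's preliminaries, with no appeal to ramification theory. What the citation buys is brevity and more information (the full splitting data $e$, $f$, $g$ of $p$ in $\Z[\zeta_n]$), of which the lemma needs only the inertia degree.
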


\begin{proof}
This follows by \cite[VI.1.12 and VI.1.15]{Fr-Ta92}.
\end{proof}

%\begin{lemma} \label{L 3.2}
%Let $n,m\ge 2$ be positive integers such that every prime divisor of $m$ is a divisor of  $n$. Then for every prime $p$, with $\gcd(n, p)=1$,  we have $\ord_{nm}p\leq m\ord_np$.
%\end{lemma}
%
%\begin{proof}
%By induction, we only need to prove that $\ord_{nq}p\leq q\ord_np$ for every prime divisor $q$ of $n$. Let $q$ be  a prime divisor of $n$. Since $p^{\ord_np}\equiv 1\pmod n$, there exists $t\in \Z$ such that $p^{\ord_np}=1+tn$ and hence $p^{q\ord_np}=(1+tn)^q=1+\sum_{i=1}^{q-1} {q\choose i}(tn)^i+(tn)^q\equiv 1\pmod {nq}$. It follows that $\ord_{nq}p\le q\ord_np$.
%\end{proof}

\medskip
\begin{proof}[{\bf Proof of Theorem \ref{main1}}]
Let $n_2$ be the maximal divisor of $\exp(G)$ such that $p\nmid n_2$ and
let $n_3=\frac{n_2}{n_1}$. Since
$$m'=\frac{\lcm(n_2, n_0)}{n_0n_1}=\frac{\lcm(n_3, n_0)}{n_0}\,,$$
we have $\lcm(n_3, n_0)=n_0m'$ and $\lcm(n_2, n_0)=n_0m'n_1$.

Let $m$ be a divisor of $n_2$. Then $$[\Q(\zeta_{n})(\zeta_m):\Q(\zeta_{n})]=[\Q(\zeta_{\lcm(n,m)}:\Q(\zeta_{n})]=\frac{\varphi(\lcm(n,m))}{\varphi(n)}=\frac{\varphi(\lcm(n_0,m))}{\varphi(n_0)}$$ and $$\ord_m N(\mathfrak p)=\ord_mp^{\ord_{n_0}p}=\frac{\ord_mp}{\gcd(\ord_mp, \ord_{n_0}p)}=\frac{\lcm(\ord_{n_0}p, \ord_mp)}{\ord_{n_0}p}=\frac{\ord_{\lcm(n_0,m)}p}{\ord_{n_0}p}\,.$$

Therefore by Theorem \ref{t2} that  $R[G]$ is clean if and only if
$$
\text{for every divisor $m$ of $n_2$,\quad  we have }\quad  \frac{\varphi(\lcm(n_0,m))}{\varphi(n_0)}=\frac{\ord_{\lcm(n_0,m)}p}{\ord_{n_0}p}\,.$$

\medskip
1. We first suppose that  $R[G]$ is clean. Since $n_1$, $n_3$ and $n_2$ are  divisors of $n_2$, we obtain
 \begin{align*}
 &\frac{\varphi(\lcm(n_0,n_1))}{\varphi(n_0)}=\frac{\ord_{\lcm(n_0,n_1)}p}{\ord_{n_0}p}\,,\\
 &\frac{\varphi(\lcm(n_0, n_3))}{\varphi(n_0)}=\frac{\ord_{\lcm(n_0,n_3)}p}{\ord_{n_0}p}\,,\\
\text{ and }\quad & \frac{\varphi(\lcm(n_0, n_2))}{\varphi(n_0)}=\frac{\ord_{\lcm(n_0,n_2)}p}{\ord_{n_0}p}\,.
 \end{align*}

Since $\gcd(n_1,n_0)=1$, we obtain \begin{align*}
\varphi(n_1)&=\frac{\varphi(\lcm(n_0,n_1))}{\varphi(n_0)}=\frac{\ord_{\lcm(n_0,n_1)}p}{\ord_{n_0}p}=\frac{\lcm(\ord_{n_0}p, \ord_{n_1}p)}{\ord_{n_0}p}\le\frac{\ord_{n_1}p\ord_{n_0}p}{\ord_{n_0}p}
=\ord_{n_1}p\le \varphi(n_1)\,.
\end{align*}
Then $\ord_{n_1}p=\varphi(n_1)$.

Since
\begin{align*}
m'=\frac{\varphi(n_0m')}{\varphi(n_0)}=\frac{\varphi(\lcm(n_0, n_3))}{\varphi(n_0)}=\frac{\ord_{\lcm(n_0,n_3)}p}{\ord_{n_0}p}=\frac{\ord_{n_0m'}p}{\ord_{n_0}p}\le \frac{m'\ord_{n_0}p}{\ord_{n_0}p} =m'\,,
\end{align*}
we obtain $\ord_{n_0m'}p=m'\ord_{n_0}p$.

Since \begin{align*}
m'\varphi(n_1)&=\frac{\varphi(n_0m'n_1)}{\varphi(n_0)}=\frac{\varphi(\lcm(n_0, n_2))}{\varphi(n_0)}=\frac{\ord_{\lcm(n_0,n_2)}p}{\ord_{n_0}p}=\frac{\ord_{n_0m'n_1}p}{\ord_{n_0}p}\\
&=\frac{\lcm(\ord_{n_0m'}p, \ord_{n_1}p)}{\ord_{n_0}p}\le \frac{\ord_{n_0m'}p\ord_{n_1}p}{\ord_{n_0}p}=\frac{m'\ord_{n_0}p\varphi(n_1)}{\ord_{n_0}p}=m'\varphi(n_1)\,,
\end{align*}
we obtain $\lcm(\ord_{n_0m'}p, \ord_{n_1}p)=\ord_{n_0m'}p\ord_{n_1}p$. Thus $\gcd(\ord_{n_0m'}p, \ord_{n_1}p)=1$.

\medskip
2.  Conversely, suppose that
$\ord_{n_1}p=\varphi(n_1)$, $\ord_{n_0m'}p=m'\ord_{n_0}p$, and $\gcd(\ord_{n_1}p,\ord_{n_0m'}p)=1$.
Then for every $m\t n_2$, we let $m_1=\gcd(m,n_1)$ and $m_2=\frac{\lcm(m/m_1, n_0)}{n_0}$. Then $\ord_{m_1}p=\varphi(m_1)$. It follows by $n_0m_2\t n_0m'$ that $\gcd(\ord_{m_1}p, \ord_{n_0m_2}p)=1$ and
$$\ord_{n_0m'}p=\ord_{n_0m_2\frac{m'}{m_2}}p\le \frac{m'}{m_2}\ord_{n_0m_2}p\le \frac{m'}{m_2}m_2\ord_{n_0}p=m'\ord_{n_0}p=\ord_{n_0m'}p\,.$$
Thus $\ord_{n_0m_2}p= m_2\ord_{n_0}p$.
It follows that
\begin{align*}
\frac{\ord_{\lcm(n_0,m)}p}{\ord_{n_0}p}&=\frac{\ord_{m_1n_0m_2}p}{\ord_{n_0}p}=
\frac{\lcm(\ord_{m_1}p, \ord_{n_0m_2}p)}{\ord_{n_0}p}\\
&=
\frac{\ord_{m_1}p\ord_{n_0m_2}p}{\ord_{n_0}p}=\frac{m_2\ord_{m_1}p\ord_{n_0}p}{\ord_{n_0}p}\\
&= m_2\ord_{m_1}p=m_2\varphi(m_1)\\
&=\frac{\varphi(m_1n_0m_2)}{\varphi(n_0)}=\frac{\varphi(\lcm(n_0,m))}{\varphi(n_0)}\,.
\end{align*}
Therefore $R[G]$ is clean.

\medskip
3. In particular, if $\exp(G)$ is a divisor of $n$, then $n_1=m'=1$ and hence $\mathcal O_{\mathfrak p}[G]$ is clean.
\end{proof}

Next we characterize when a group ring of a finite abelian group over a local ring $\mathcal O_{\mathfrak p} $ is $*$-clean. We need the following two propositions.

\begin{proposition} \label{p1}
Let $m, n\in \N$. Then $\Q(\zeta_m+\zeta_m^{-1})(\zeta_n)=\Q(\zeta_m)(\zeta_n)$ if and only if $\gcd(m,n)\ge 3$ or $m\le 2$.
\end{proposition}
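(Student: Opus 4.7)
Write $L = \Q(\zeta_m + \zeta_m^{-1})$, so $L$ is the maximal real subfield of $\Q(\zeta_m)$; the claim is that $L(\zeta_n) = \Q(\zeta_m)(\zeta_n) = \Q(\zeta_{\lcm(m,n)})$ iff $m \le 2$ or $\gcd(m,n) \ge 3$. The trivial case $m \le 2$ is immediate since then $L = \Q = \Q(\zeta_m)$. For $m \ge 3$ I would prove both directions simultaneously by a degree count over $\Q$, comparing $[L(\zeta_n):\Q]$ with $[\Q(\zeta_{\lcm(m,n)}):\Q] = \varphi(\lcm(m,n)) = \varphi(m)\varphi(n)/\varphi(d)$, where $d = \gcd(m,n)$.

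The first step is to use that both $L$ and $\Q(\zeta_n)$ are Galois over $\Q$ (being subfields of a cyclotomic field), so the standard compositum formula applies:
\[
[L(\zeta_n):\Q] = \frac{[L:\Q]\cdot[\Q(\zeta_n):\Q]}{[L\cap\Q(\zeta_n):\Q]} = \frac{\varphi(m)\varphi(n)/2}{[L\cap\Q(\zeta_n):\Q]}.
\]
Hence the key quantity to compute is $L \cap \Q(\zeta_n)$. Using the identity from the preliminaries, $\Q(\zeta_m)\cap\Q(\zeta_n) = \Q(\zeta_d)$, I get
\[
L\cap\Q(\zeta_n) = L\cap\Q(\zeta_m)\cap\Q(\zeta_n) = L\cap\Q(\zeta_d).
\]

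Now I split on $d$. If $d \le 2$, then $\Q(\zeta_d) = \Q$, so $L\cap\Q(\zeta_n)=\Q$, giving $[L(\zeta_n):\Q] = \varphi(m)\varphi(n)/2$. Since $[\Q(\zeta_m)(\zeta_n):\Q] = \varphi(m)\varphi(n)/\varphi(d) = \varphi(m)\varphi(n)$ (as $\varphi(d)=1$), the two fields differ by an index of $2$, so equality fails. If $d \ge 3$, the main sublemma to verify is that $L \cap \Q(\zeta_d) = \Q(\zeta_d+\zeta_d^{-1})$: the inclusion $\supseteq$ holds because $\Q(\zeta_d+\zeta_d^{-1}) \subseteq \Q(\zeta_m)\cap\R = L$ and also $\Q(\zeta_d+\zeta_d^{-1}) \subseteq \Q(\zeta_d)$, and the inclusion $\subseteq$ holds because $L \subset \R$ forces $L\cap\Q(\zeta_d)$ into the unique maximal real subfield $\Q(\zeta_d+\zeta_d^{-1})$ of $\Q(\zeta_d)$ (of index $2$). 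Hence $[L\cap\Q(\zeta_n):\Q] = \varphi(d)/2$, which plugs into the formula to give $[L(\zeta_n):\Q] = \varphi(m)\varphi(n)/\varphi(d) = [\Q(\zeta_{\lcm(m,n)}):\Q]$, and since $L(\zeta_n)\subseteq\Q(\zeta_{\lcm(m,n)})$, equality holds.

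The only mildly delicate step is identifying $L\cap\Q(\zeta_d)$ with $\Q(\zeta_d+\zeta_d^{-1})$; everything else is a bookkeeping of Euler-$\varphi$ values together with the two standard cyclotomic identities $\Q(\zeta_m)\cap\Q(\zeta_n)=\Q(\zeta_{\gcd(m,n)})$ and $\Q(\zeta_m)(\zeta_n)=\Q(\zeta_{\lcm(m,n)})$ already recorded in Section 2, plus the Galois compositum formula.
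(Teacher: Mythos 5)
Your proof is correct and is essentially the paper's argument: both are Galois-theoretic degree counts built on the compositum/intersection identities for cyclotomic fields, with the decisive point being that $\Q(\zeta_m+\zeta_m^{-1})$ is real while $\zeta_{\gcd(m,n)}$ is non-real exactly when $\gcd(m,n)\ge 3$. The only difference is bookkeeping: you compute absolute degrees over $\Q$ and identify $L\cap\Q(\zeta_{\gcd(m,n)})$ explicitly as the maximal real subfield, whereas the paper computes relative degrees over $K=\Q(\zeta_m+\zeta_m^{-1})$ and reduces the question to whether $\zeta_{\gcd(m,n)}\in K$.
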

\begin{proof} If $m\le 2$, then it is obvious that $\Q(\zeta_m+\zeta_m^{-1})(\zeta_n)=\Q(\zeta_m)(\zeta_n)$. We suppose $m\ge 3$.

Let $K=\Q(\zeta_m+\zeta_m^{-1})$. Then $K\subset K(\zeta_n)\subset \Q(\zeta_m)(\zeta_n)=\Q(\zeta_{\lcm(n,m)})$. Thus $K(\zeta_n)= \Q(\zeta_m)(\zeta_n)$ if and only if
$[K(\zeta_n):K]=[\Q(\zeta_{\lcm(n,m)}):K]$.

Since $[\Q(\zeta_{\lcm(n,m)}):K]=\frac{[\Q(\zeta_{\lcm(n,m)}):\Q]}{[K:\Q]}=\frac{2\varphi(\lcm(m,n))}{\varphi(m)}=\frac{2\varphi(n)}{\varphi(\gcd(m,n))}$ and
\begin{align*}
[K(\zeta_n):K]=[\Q(\zeta_n):K\cap \Q(\zeta_n)]&=[\Q(\zeta_n):\Q(\zeta_{\gcd(m,n)})][\Q(\zeta_{\gcd(m,n)}):K\cap \Q(\zeta_n)]\\
&=\frac{\varphi(n)}{\varphi(\gcd(m,n))}[\Q(\zeta_{\gcd(m,n)}):K\cap \Q(\zeta_{\gcd(m,n)})]\\
&=\frac{\varphi(n)}{\varphi(\gcd(m,n))}[K(\zeta_{\gcd(m,n)}):K]\,,
\end{align*}
we obtain $|K(\zeta_{\gcd(m,n)}):K|\le 2$. Moreover,
 $K(\zeta_n)= \Q(\zeta_m)(\zeta_n)$ if and only if $|K(\zeta_{\gcd(m,n)}):K|=2$ if and only if $\zeta_{\gcd(m,n)}\not\in K$.

If $\gcd(m,n)\ge 3$, then $\zeta_{\gcd(m,n)}\not\in \R$ and hence $\zeta_{\gcd(m,n)}\not\in K\subset \R$. It follows that $K(\zeta_n)= \Q(\zeta_m)(\zeta_n)$.

If $\gcd(m,n)\le 2$, then  $\zeta_{\gcd(m,n)}\in \Q \subset K$ and hence $K(\zeta_n)\neq \Q(\zeta_m)(\zeta_n)$.
\end{proof}

\begin{proposition}\label{p2}
Let $G$ be a finite abelian group and let $n\in \N$. Then $\Q(\zeta_n)[G]$ is $*$-clean if and only if $\exp(G)\ge3 $ and $\gcd(\exp(G),n)\le 2$.
\end{proposition}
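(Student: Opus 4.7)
The plan is to apply the commutative criterion from \cite[Theorem 2.2]{Li-Zhou11}, namely that $\ast$-clean is equivalent to clean plus every idempotent being a projection. Since $\Q(\zeta_n)[G]$ is a finite-dimensional semisimple commutative $\Q$-algebra it is a finite product of number fields, hence automatically clean. So the task will reduce to deciding when every idempotent of $\Q(\zeta_n)[G]$ is fixed by the classical involution $\ast$.

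Writing $K=\Q(\zeta_n)$ and $m=\exp(G)$, I would extend scalars to $M:=K(\zeta_m)=\Q(\zeta_{\lcm(n,m)})$. Over $M$ the algebra $M[G]$ splits completely as $\bigoplus_\chi M\,e_\chi$ with the usual primitive idempotents $e_\chi=|G|^{-1}\sum_{g\in G}\chi(g^{-1})g$ indexed by homomorphisms $\chi\colon G\to M^{\times}$. A short direct computation will give $e_\chi^{\ast}=e_{\chi^{-1}}$ and $\sigma(e_\chi)=e_{\sigma\circ\chi}$ for every $\sigma\in\Gamma:=\operatorname{Gal}(M/K)$. Standard Galois descent then identifies the primitive idempotents of $K[G]$ with the orbit sums $\sum_{\chi\in\mathcal O}e_\chi$, where $\mathcal O$ runs through the $\Gamma$-orbits on the character set. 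Consequently, every idempotent of $K[G]$ is a projection if and only if every $\Gamma$-orbit is closed under inversion $\chi\mapsto\chi^{-1}$.

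The next step is to collapse this orbit condition into a single field-theoretic statement: there exists a non-trivial $\sigma\in\Gamma$ with $\sigma(\zeta_m)=\zeta_m^{-1}$. For sufficiency, such a $\sigma$ satisfies $\sigma(\zeta_d)=\sigma(\zeta_m^{m/d})=\zeta_d^{-1}$ for every $d\mid m$, so it sends every character to its inverse. For necessity, $G$ admits a character of order exactly $m$, and the orbit-closure demand for this character forces some element of $\Gamma$ to invert $\mu_m$; such an element is non-trivial precisely when $m\geq 3$. By the Galois correspondence, such a non-trivial $\sigma$ exists exactly when $\Q(\zeta_m+\zeta_m^{-1})(\zeta_n)\subsetneq\Q(\zeta_m)(\zeta_n)$, because any such $\sigma$ would fix the left-hand field while moving $\zeta_m$. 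Finally, Proposition \ref{p1} characterises this strict inclusion as the negation of ``$\gcd(m,n)\geq 3$ or $m\leq 2$,'' which is exactly $m\geq 3$ together with $\gcd(m,n)\leq 2$, giving the desired equivalence. The delicate point I expect to handle carefully is the passage from orbit-stability over \emph{all} divisor orders to the single inversion statement on $\zeta_m$; this is manageable because a $\sigma$ inverting $\zeta_m$ simultaneously inverts every $\zeta_d$ with $d\mid m$.
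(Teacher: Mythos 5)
Your route is genuinely different from the paper's. The paper disposes of this proposition in one line, by quoting the criterion of \cite[Theorem 1.2]{Ha-Re-Zh17a} (which encodes $\ast$-cleanness of a group algebra in the comparison of $F(\zeta_m)$ with $F(\zeta_m+\zeta_m^{-1})$ for $m=\exp(G)$) and then applying Proposition \ref{p1}; you instead re-derive that criterion from scratch via \cite[Theorem 2.2]{Li-Zhou11}, the splitting of $M[G]$ with $M=\Q(\zeta_{\lcm(n,m)})$ into the idempotents $e_\chi$, the computation $e_\chi^{\ast}=e_{\chi^{-1}}$, and Galois descent to identify the primitive idempotents of $\Q(\zeta_n)[G]$ with orbit sums. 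For $m\ge 3$ this chain is sound: it gives ``all idempotents are projections $\Longleftrightarrow$ some $\sigma\in\operatorname{Gal}(M/\Q(\zeta_n))$ sends $\zeta_m$ to $\zeta_m^{-1}$ $\Longleftrightarrow$ $\Q(\zeta_m+\zeta_m^{-1})(\zeta_n)\subsetneq\Q(\zeta_m)(\zeta_n)$,'' and Proposition \ref{p1} finishes it. This is a reasonable self-contained substitute for the citation.

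The gap is the clause $\exp(G)\ge 3$. Your orbit condition is equivalent to the existence of \emph{some} $\sigma$ (possibly trivial) inverting $\zeta_m$, not of a non-trivial one, and when $m\le 2$ the identity already does the job. Concretely, if $\exp(G)\le 2$ then $g=g^{-1}$ for all $g\in G$, the classical involution is the identity map, every idempotent is a projection, and $\Q(\zeta_n)[G]$ is clean (it is a finite product of fields), so your own first step via \cite[Theorem 2.2]{Li-Zhou11} declares it $\ast$-clean; yet your final criterion (non-trivial $\sigma$, equivalently the strict field inclusion, equivalently $m\ge 3$ and $\gcd(m,n)\le 2$) declares it not. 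So the ``$\ast$-clean $\Rightarrow\exp(G)\ge 3$'' half of the statement is not proved by your argument, and in fact cannot be extracted from your framework: what your reduction actually yields is ``$\ast$-clean $\Longleftrightarrow\gcd(\exp(G),n)\le 2$.'' In the paper the $\exp(G)\ge 3$ clause enters only through the statement of \cite[Theorem 1.2]{Ha-Re-Zh17a}. To prove the proposition as stated you must either invoke that cited theorem (with its conventions) for the degenerate case, or treat $m\le 2$ explicitly rather than absorbing it into the phrase ``non-trivial precisely when $m\ge 3$''; as written, that phrase conflates the existence of an inverting automorphism with the existence of a non-trivial one, and that is exactly where the equivalence breaks.
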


\begin{proof}

This follows from   \cite[Theorem 1.2]{Ha-Re-Zh17a} and Proposition \ref{p1}.
\end{proof}

\begin{theorem}\label{main2}
Let $K=\Q(\zeta_n)$ be a cyclotomic field for some $n\in \N$, $\mathcal O=\Z[\zeta_n]$ its rings of integers, $\mathfrak p\subset \mathcal O$ a nonzero prime ideal with $p\Z=\mathfrak p\cap \Z$, where $p$ is a prime, and $G$ a finite abelian group with $p\nmid \exp(G)$.
let  $n_0$ be  the maximal  divisor of $n$ with $p\nmid n_0$ and let $n_1$ be the maximal divisor of $\exp(G)$ with $\gcd(n_1, n_0)=1$.
 Then the group ring $\mathcal O_{\mathfrak p}[G]$ is $*$-clean
 if and only if $\ord_{n_1}p=\varphi(n_1)$, $3\le\exp(G)\le 2n_1$, and $\gcd(\ord_{n_1}p,\ord_{n_0}p)=1$.
\end{theorem}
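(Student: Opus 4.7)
The plan is to combine Theorem \ref{t6}, Proposition \ref{p2}, and Theorem \ref{main1}. Since $p \nmid \exp(G)$, Theorem \ref{t6} tells us that $\mathcal O_{\mathfrak p}[G]$ is $*$-clean if and only if it is clean and $K[G] = \Q(\zeta_n)[G]$ is $*$-clean. By Proposition \ref{p2}, the latter holds if and only if $\exp(G) \geq 3$ and $\gcd(\exp(G), n) \leq 2$; using $n = p^u n_0$ together with $p \nmid \exp(G)$, this simplifies to $\gcd(\exp(G), n_0) \leq 2$.

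Next, I would relate this gcd inequality to the bound $\exp(G) \leq 2n_1$ appearing in the theorem. Decompose $\exp(G) = n_1 e'$, where, by the maximality of $n_1$, every prime divisor of $e' = \exp(G)/n_1$ divides $n_0$. Thus $\gcd(\exp(G), n_0) = \gcd(e', n_0)$, and a short case analysis on $e'$ shows that this gcd is at most $2$ if and only if $e' \in \{1,2\}$, i.e.\ $\exp(G) \leq 2n_1$: any odd prime divisor of $e'$ automatically divides $n_0$ and forces the gcd to be $\geq 3$, while for $e' = 2^k$ one compares $k$ with the $2$-adic valuation of $n_0$. Under this bound one also checks that $m' = \lcm(\exp(G), n_0)/(n_0 n_1) = e'/\gcd(e', n_0) = 1$.

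With $m' = 1$ in hand, Theorem \ref{main1}'s three conditions for cleanness collapse: the equality $\ord_{n_0 m'} p = m' \ord_{n_0} p$ is tautological, and $\gcd(\ord_{n_1} p, \ord_{n_0 m'} p) = 1$ reduces to $\gcd(\ord_{n_1} p, \ord_{n_0} p) = 1$. Thus cleanness of $\mathcal O_{\mathfrak p}[G]$ is equivalent to $\ord_{n_1} p = \varphi(n_1)$ together with $\gcd(\ord_{n_1} p, \ord_{n_0} p) = 1$. Combining this with the bounds $3 \leq \exp(G) \leq 2n_1$ obtained from the $*$-cleanness of $K[G]$ yields exactly the three conditions in the theorem. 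The main obstacle is the combinatorial step in the second paragraph; once the equivalence $\gcd(\exp(G), n_0) \leq 2 \Longleftrightarrow \exp(G) \leq 2n_1$ and the reduction to $m' = 1$ are established, the somewhat technical cleanness criterion of Theorem \ref{main1} collapses to the simpler form stated here.
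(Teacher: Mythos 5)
Your overall route---Theorem \ref{t6} to reduce $*$-cleanness to ``clean $+$ $K[G]$ $*$-clean'', Proposition \ref{p2} for $K[G]$, and Theorem \ref{main1} for cleanness---is the same as the paper's, and your sufficiency direction is essentially the paper's argument: from $\exp(G)\le 2n_1$ one gets $e'=\exp(G)/n_1\le 2$, hence $e'\mid n_0$ and $m'=1$, so Theorem \ref{main1} gives cleanness, while $\gcd(\exp(G),n_0)\le 2$ gives $*$-cleanness of $\Q(\zeta_n)[G]$.

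The necessity direction, however, has a genuine gap: the equivalence you assert, $\gcd(\exp(G),n_0)\le 2\Longleftrightarrow \exp(G)\le 2n_1$, is false. It is true that every prime divisor of $e'=\exp(G)/n_1$ divides $n_0$, so $\gcd(\exp(G),n_0)\le 2$ forces $e'=2^{\ell}$ and (if $\ell\ge 1$) $2\mid n_0$; but when $n_0\equiv 2\pmod 4$ (say $n_0=6$) and $\ell\ge 2$ one has $\gcd(\exp(G),n_0)=\gcd(2^{\ell}n_1,n_0)=2$, yet $\exp(G)=2^{\ell}n_1>2n_1$. So from the $*$-cleanness of $K[G]$ you can only conclude $\exp(G)=2^{\ell}n_1$ with $\ell\in\N_0$; the bound $\ell\le 1$ cannot be extracted from the gcd condition alone, and consequently your reduction to $m'=1$ is also unjustified in this direction. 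This is precisely where the paper does extra work: for $\ell\ge 2$ it writes $n_0=2n_0'$ with $n_0'$ odd, computes $m'=2^{\ell-1}$, and plays this off against the cleanness condition $\ord_{n_0m'}p=m'\ord_{n_0}p$ from Theorem \ref{main1}, using $\ord_{2^{\ell}}p\le 2^{\ell-2}$ to get a contradiction. So ruling out the borderline case requires the arithmetic order conditions, not just the gcd bound. (Be aware, too, that this borderline case is delicate even in the paper: the inequality $\ord_{2^{\ell}}p\le 2^{\ell-2}$ is only automatic for $\ell\ge 3$, so the subcase $\exp(G)=4n_1$ with $n_0\equiv 2\pmod 4$, where $\ord_4p$ may equal $2$ and $\ord_{n_0}p$ may be odd, deserves a separate and careful check; an argument that sees only the gcd condition cannot detect it at all.)
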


\begin{proof}
1. Suppose  $\ord_{n_1}p=\varphi(n_1)$, $3\le\exp(G)\le 2n_1$, and $\gcd(\varphi(n_1), \ord_{n_0}p)=1$. Since every prime divisor of $\exp(G)/n_1$ is a divisor of $n_0$, it follows from   $\exp(G)/n_1\le 2$ that  $(\exp(G)/n_1)$ divides $ n_0$.
Hence $$\frac{\lcm(\exp(G), n_0)}{n_0n_1}=\frac{\lcm(\exp(G)/n_1, n_0)}{n_0}=1\,.$$
 Thus by Theorem \ref{main1} $\mathcal O_{\mathfrak p}[G]$ is clean.  Since  $p\nmid \exp(G)$, we have $\gcd(n, \exp(G))=\gcd(n_0, \exp(G)/n_1)\le 2$. Thus  it follows from   Proposition \ref{p2} that $\Q(\zeta_{n})[G]$ is $*$-clean and hence by Theorem \ref{t6} $\mathcal O_{\mathfrak p}[G]$ is $*$-clean.

\medskip
2. Suppose  $\mathcal O_{\mathfrak p}[G]$ is $*$-clean.
Let $m'=\frac{\lcm(\exp(G), n_0)}{n_0n_1}$.
Since $\mathcal O_{\mathfrak p}[G]$ is clean, it follows from  Theorem \ref{main1} that  $$\ord_{n_1}p=\varphi(n_1), \quad\quad \ord_{n_0m'}p=m'\ord_{n_0}p,\quad  \text{and} \quad \gcd(\varphi(n_1), m'\ord_{n_0}p)=1\,.$$
 By Theorem \ref{t6} and Proposition \ref{p2}, we have  $\exp(G)\ge 3$ and $\gcd(n,\exp(G))\le 2$. Thus $\gcd(n_0, \exp(G)/n_1)\le 2$. Since every prime divisor of $\exp(G)/n_1$ is a divisor of $n_0$, we obtain $$\exp(G)=2^{\ell}n_1\text{ for some }\ell\in \N_0\,.$$
If  $\ell\ge 2$, then  $n_0=2n_0'$ with $n_0'$ is odd which implies that $m'=2^{\ell-1}$. Thus
$$
2^{\ell-1}\ord_{n_0}p=m'\ord_{n_0}p=\ord_{m'n_0}p=\lcm(\ord_{2^{\ell}}p, \ord_{n_0'}p)\le 2^{\ell-2}\ord_{n_0'}p=2^{\ell-2}\ord_{n_0}p\,,$$ a contradiction.
Thus $\exp(G)\le 2n_1$ and $m'=1$. The assertion follows.
\end{proof}

Next, we provide some ($*$-clean or non $*$-clean) clean group rings in  each case of the characterizations of Theorems \ref{main1} and \ref{main2}.

\begin{example}Let $K=\Q(\zeta_n)$ be a cyclotomic field for some $n\in \N$, $\mathcal O=\Z[\zeta_n]$ its rings of integers, and $G$ a finite abelian group with $\exp(G)\ge 3$.
\begin{enumerate}
\item If $p$ is a primitive root of unity of $\exp(G)$, then $\Z_{(p)}[G]$ is $*$-clean.

\item Suppose $\gcd(\exp(G), n)=1$ and $\exp(G)$ has a primitive root. If there is a prime divisor $q$ of $\varphi(n)$ such that $q\nmid \varphi(\exp(G))$, then there exists $x, y\in \N$ with $\gcd(x,n)=1$ and $\gcd(y, \exp(G))=1$  such that $\ord_nx=q$ and $\ord_{\exp(G)}y=\varphi(\exp(G))$. By Chinese Remainder Theory, there exists $z\in \N$ with $\gcd(z, n\exp(G))=1$ such that $\ord_nz=q$ and $\ord_{\exp(G)}z=\varphi(\exp(G))$. By Dirichlet's prime number theorem, there is a prime $p$ such that $p\equiv z\pmod {n\exp(G)}$. Let $\mathfrak p\subset \mathcal O$ be a prime ideal such that $\mathfrak p\cap \Z=p\Z$.
Then by Theorem \ref{main2} $\mathcal O_{\mathfrak p}[G]$ is $*$-clean.

\item Suppose  $\gcd(\exp(G), n)\ge 3$, $\gcd\big(\frac{\exp(G)}{\gcd(\exp(G),n)}, n\big)=1$, and $\frac{\exp(G)}{\gcd(\exp(G),n)}$ has a primitive root. If there is a prime divisor $q$ of $\varphi(n)$ such that $q\nmid \varphi\big(\frac{\exp(G)}{\gcd(\exp(G),n)}\big)$, then there exists a prime $p$ such that $\gcd(\ord_np, \ord_{\frac{\exp(G)}{\gcd(\exp(G),n)}}p)=1$. Let $\mathfrak p\subset \mathcal O$ be a prime ideal such that $\mathfrak p\cap \Z=p\Z$.  Then by Theorems \ref{main1} and \ref{main2},  $\mathcal O_{\mathfrak p}[G]$ is clean but not $*$-clean.

\item Let $n=7$, $\exp(G)=49\times 3$, and let $\mathfrak p\subset \mathcal O$ be a prime ideal such that $\mathfrak p\cap \Z=23\Z$.
Since $\ord_7{23}=3$, $\ord_{3}{23}=2=\varphi(3)$, and $\ord_{49}{23}=21=7\ord_7{23}$, it follows from    Theorems \ref{main1} and \ref{main2} $\mathcal O_{\mathfrak p}[G]$ is clean but not $*$-clean.

\end{enumerate}
\end{example}

\bigskip
\section{Group rings over local subrings of quadratic fields}
\medskip

In this section, we investigate when a group ring over a local subring of a quadratic field is clean. Let $d$ be a non-zero square-free integer with $d\neq 1$,  $K=\Q(\sqrt{d})$ a quadratic number field,
\begin{align*}
\omega=\left\{\begin{aligned}
&\sqrt{d}      &&\text{ if }d\equiv 2,3\pmod 4\,,\\
&\frac{1+\sqrt{d}}{2} &&\text{ if }d\equiv 1\pmod 4\,,
\end{aligned}
\right.
\text{ and }
\Delta=\left\{\begin{aligned}
&4d      &&\text{ if }d\equiv 2,3\pmod 4\,,\\
&d &&\text{ if }d\equiv 1\pmod 4\,.
\end{aligned}
\right.
\end{align*}
Then $\mathcal O_K=\Z[\omega]$ is the ring of integers  and $\Delta$ is the discriminant of $K$.

 For an odd prime $p$ and an integer $a$, we denote by $\left(\frac{a}{p}\right)\in \{-1,0,1\}$ the Legendre symbol of $a$ modulo $p$.

We first provide two useful lemmas.

\begin{lemma}\label{2.10}
Let $d\neq 1$ be a non-zero square-free integer and let $\Delta$ be the discriminant of $\Q(\sqrt{d})$.
Then $\Q(\sqrt{d})\subset \Q(\zeta_n)$ if and only if $n$ is a multiple of $\Delta$.
\end{lemma}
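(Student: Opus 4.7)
The plan is to split the equivalence into the two implications and handle each by concrete computation inside cyclotomic fields.

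For the implication ``$|\Delta| \mid n$ implies $\Q(\sqrt{d}) \subset \Q(\zeta_n)$'', since $\Q(\zeta_{|\Delta|}) \subset \Q(\zeta_n)$ whenever $|\Delta| \mid n$, it suffices to prove $\Q(\sqrt{d}) \subset \Q(\zeta_{|\Delta|})$. I would invoke the classical quadratic Gauss sum: for each odd prime $p$, the element $\tau_p = \sum_{a=1}^{p-1}\bigl(\tfrac{a}{p}\bigr)\zeta_p^a \in \Q(\zeta_p)$ satisfies $\tau_p^{\,2} = (-1)^{(p-1)/2}p$, so $\sqrt{p^{*}} \in \Q(\zeta_p)$ where $p^{*} = (-1)^{(p-1)/2}p$. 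Together with $\sqrt{-1} = \zeta_4 \in \Q(\zeta_4)$ and $\sqrt{2} = \zeta_8 + \zeta_8^{-1} \in \Q(\zeta_8)$, this gives $\sqrt{-1}, \sqrt{2}, \sqrt{-2}$ inside $\Q(\zeta_8)$. Writing $d$ via its prime factorization and tracking signs case-by-case according to $d \bmod 4$ then realizes $\sqrt{d}$ as a product of these quadratic elements inside $\Q(\zeta_{|\Delta|})$.

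For the converse, I would argue by ramification. Assume $\Q(\sqrt{d}) \subset \Q(\zeta_n)$. By multiplicativity of ramification indices in the tower $\Q \subset \Q(\sqrt{d}) \subset \Q(\zeta_n)$, every prime ramified in $\Q(\sqrt{d})/\Q$ is ramified in $\Q(\zeta_n)/\Q$. The ramified primes in $\Q(\sqrt{d})/\Q$ are exactly the divisors of $\Delta$, and those in $\Q(\zeta_n)/\Q$ are exactly the primes dividing $n$ (replacing $n$ by $n/2$ if $n \equiv 2 \pmod 4$, which does not change the field). This immediately gives $p \mid n$ for every odd prime $p \mid |\Delta|$. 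For the prime $2$, comparison of the ramification index of $2$ in $\Q(\zeta_{2^v})/\Q$, which equals $\varphi(2^v)$, with that in $\Q(\sqrt{d})/\Q$, which equals $2$ when $d \equiv 2, 3 \pmod 4$ and $1$ when $d \equiv 1 \pmod 4$, forces $4 \mid n$ in the case $d \equiv 3 \pmod 4$. To push this to $8 \mid n$ in the case $d \equiv 2 \pmod 4$, I would enumerate the quadratic subfields of $\Q(\zeta_{2^v})$ via the Galois correspondence for $(\Z/2^v\Z)^\times$: the unique quadratic subfield of $\Q(\zeta_4)$ is $\Q(\sqrt{-1})$, whereas for $v \ge 3$ the three quadratic subfields are $\Q(\sqrt{-1}), \Q(\sqrt{2}), \Q(\sqrt{-2})$. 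Since $d \equiv 2 \pmod 4$ forces $\Q(\sqrt{d})$ to have 2-part equal to $\Q(\sqrt{\pm 2})$, one concludes $v \ge 3$, i.e., $8 \mid n$.

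The main obstacle I expect is precisely this 2-adic refinement: the ramification-index bound alone only yields $v \ge 2$, so distinguishing $d \equiv 2$ from $d \equiv 3 \pmod 4$ requires either the explicit enumeration of quadratic subfields of $\Q(\zeta_{2^v})$ sketched above or an appeal to the conductor-discriminant formula, which identifies the conductor of the Kronecker character $\bigl(\tfrac{\Delta}{\cdot}\bigr)$ attached to $\Q(\sqrt{d})$ with $|\Delta|$. The odd-prime part of the argument and the sufficiency direction should then be routine.
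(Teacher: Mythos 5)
Your proposal is correct, but it takes a different route from the paper: the paper disposes of this lemma with a one-line citation to Weintraub's \emph{Galois theory} (Corollary 4.5.5), i.e.\ it treats the statement as standard, whereas you reprove it from scratch. Your two ingredients are exactly the classical ones: for sufficiency, the quadratic Gauss sum giving $\sqrt{p^*}\in\Q(\zeta_p)$ with $p^*=(-1)^{(p-1)/2}p$, together with $\zeta_4$ and $\zeta_8+\zeta_8^{-1}$, and a sign bookkeeping according to $d\bmod 4$ (note that for $d\equiv 1\pmod 4$ one in fact has $d=\prod p_i^*$, which is why no factor of $4$ or $8$ is needed there); for necessity, ramification in the tower $\Q\subset\Q(\sqrt d)\subset\Q(\zeta_n)$ handles the odd part and the case $d\equiv 3\pmod 4$, and the $2$-adic refinement to $8\mid n$ when $d\equiv 2\pmod 4$ is supplied by the list of quadratic subfields of $\Q(\zeta_{2^v})$ (or, equivalently, by the conductor--discriminant formula). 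The only step you should spell out is why the ``$2$-part'' argument applies: from $\sqrt d\in\Q(\zeta_n)$ and $\sqrt{m^*}\in\Q(\zeta_{n'})$ (with $m$ the odd part of $|d|$ and $n'$ the odd part of $n$, using the sufficiency direction already proved) you get $\sqrt{\pm 2}\in\Q(\zeta_n)$, and then unramifiedness of $\Q(\sqrt{\pm 2})$ at odd primes (equivalently, triviality of the odd components of the associated quadratic character of $(\Z/n\Z)^{\times}$) forces $\sqrt{\pm 2}\in\Q(\zeta_{2^{v}})$, after which your enumeration gives $v\ge 3$. This is a routine fill-in, not a gap. What each approach buys: the paper's citation keeps the exposition short and leans on a standard reference, while your argument is self-contained and makes visible exactly where the three cases $d\equiv 1$, $2$, $3\pmod 4$ (hence the definition of $\Delta$) enter, which is in the spirit of how $\Delta$ is used later in Theorem 1.3.
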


\begin{proof}
This follows from  \cite[Corollary 4.5.5]{We06}
\end{proof}

\begin{lemma}\label{2.11}
Let $d\neq 1$ be a non-zero square-free integer and let $I$ be a prime ideal of $\mathcal O_K$, where $K=\Q(\sqrt{d})$.
Suppose $\Delta$ is the discriminant of $K$ and $\ch \mathcal O_K/I=p$, where $p$ is a prime.

\begin{enumerate}
\item If $p=2$, then $N(I)=p$ if and only if $\Delta\not\equiv 5\pmod 8$.

\item If $p$ is odd, then $N(I)=p$ if and only if $\big(\frac{\Delta}{p}\big)=1$ or $0$.
\end{enumerate}
\end{lemma}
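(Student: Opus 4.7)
The plan is to apply the Dedekind--Kummer theorem. In both cases $d \equiv 2,3 \pmod 4$ and $d \equiv 1 \pmod 4$, we have $\mathcal O_K = \mathbb Z[\omega]$, so the index $[\mathcal O_K : \mathbb Z[\omega]]$ is $1$ and the factorization of $p\mathcal O_K$ is mirrored by the factorization of the minimal polynomial $f_\omega(x) \in \mathbb Z[x]$ modulo $p$. Since $[K:\mathbb Q] = 2$, the residue degrees $f(I/p)$ are at most $2$, and the condition $N(I) = p$ is equivalent to saying the irreducible factor of $\overline{f_\omega}(x) \in \mathbb F_p[x]$ corresponding to $I$ is linear, i.e.\ $\overline{f_\omega}$ has a root in $\mathbb F_p$.

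For odd $p$, I would write down $f_\omega$ explicitly: $f_\omega(x) = x^2 - d$ when $d \equiv 2,3 \pmod 4$, and $f_\omega(x) = x^2 - x + (1-d)/4$ when $d \equiv 1 \pmod 4$. A direct computation shows that the discriminant of $f_\omega$ equals $\Delta$ in both cases (namely $4d$ and $d$ respectively). Since a monic quadratic over $\mathbb F_p$ with $p$ odd has a root precisely when its discriminant is a square in $\mathbb F_p$ (including the zero case of a repeated root), we obtain $N(I) = p$ if and only if $\left(\frac{\Delta}{p}\right) \in \{0,1\}$, which is part (2).

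For $p = 2$, Legendre symbols are not available, so I would argue by hand. If $d \equiv 2 \pmod 4$ then $\overline{f_\omega}(x) = x^2$ in $\mathbb F_2[x]$, and if $d \equiv 3 \pmod 4$ then $\overline{f_\omega}(x) = x^2 + 1 = (x+1)^2$; in both subcases the polynomial has a root, so $N(I) = 2$, and $\Delta = 4d$ is even, hence $\Delta \not\equiv 5 \pmod 8$. If $d \equiv 1 \pmod 4$, write $d = 1 + 4k$ with $k \in \mathbb Z$; then $\overline{f_\omega}(x) = x^2 + x + \overline{k} \in \mathbb F_2[x]$, which factors as $x(x+1)$ when $k$ is even and is irreducible when $k$ is odd. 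The parity of $k$ matches $d \bmod 8$: $k$ even corresponds to $d \equiv 1 \pmod 8$ and $k$ odd to $d \equiv 5 \pmod 8$. Recalling $\Delta = d$ in this case, $N(I) = 2$ if and only if $\Delta \not\equiv 5 \pmod 8$, giving part (1).

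The only real obstacle is bookkeeping across the four subcases (two parities of $d$ crossed with the odd/even dichotomy for $p$), and in particular making sure the computation of $\overline{f_\omega}$ at $p=2$ for $d \equiv 1 \pmod 4$ tracks the $\bmod\ 8$ distinction correctly; everything else is routine once Dedekind--Kummer is invoked.
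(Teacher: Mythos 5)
Your proposal is correct, and it is in substance a complete proof. The paper itself does not argue at all here: its proof of Lemma \ref{2.11} is a one-line citation to Fr\"ohlich--Taylor for the classical decomposition law of rational primes in quadratic fields. You instead reprove that law from scratch: you use the monogenicity $\mathcal O_K=\Z[\omega]$ (so the conductor is trivial and Dedekind--Kummer applies at every $p$, including $p=2$), identify $N(I)=p$ with the residue degree of $I$ being $1$, i.e.\ with $\overline{f_\omega}$ having a root in $\F_p$ (legitimate for a quadratic, since a quadratic with a root splits into linear factors, so the factor attached to $I$ is forced to be linear), and then note that $\operatorname{disc}(f_\omega)=\Delta$ in both cases $d\equiv 2,3\pmod 4$ and $d\equiv 1\pmod 4$, which for odd $p$ reduces the root condition to $\big(\frac{\Delta}{p}\big)\in\{0,1\}$; your hand computation at $p=2$ (the $x^2$, $(x+1)^2$, and $x^2+x+\overline{k}$ subcases, with the parity of $k$ tracking $d\bmod 8$) correctly isolates $\Delta\equiv 5\pmod 8$ as the inert case. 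What your route buys is a self-contained, elementary verification that makes the mod-$8$ criterion at $p=2$ transparent rather than a black box; what the paper's route buys is brevity, delegating a standard textbook computation (which ultimately rests on the same Dedekind--Kummer mechanism) to the reference. Either is acceptable; your version would simply replace the citation by two short explicit computations.
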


\begin{proof}
This follows by  \cite[Theorem 22, III.2.1, and V.1.1]{Fr-Ta92}.
\end{proof}

\begin{proof}[{\bf Proof of Theorem \ref{t3}}]
  Let $R=\mathcal O_{\mathfrak p}$. By Theorem \ref{t2}, we have $R[G]=\mathcal O_{\mathfrak p}[G]$ is clean if and only if $[\Q(\zeta_m):\Q(\zeta_m)\cap \Q(\sqrt{d})]=\ord_m( N(\mathfrak p))$ for every divisor $m$ of $n$.

\medskip
1. Since $\Delta\nmid n$,
it follows  by Lemma \ref{2.10} that $\Q(\sqrt{d})\cap\Q(\zeta_{m})=\Q$ for every positive divisor $m$ of $n$.

1.1. Suppose the item 1.(a) or 1.(b) holds.
By Lemma \ref{2.11} we have $ N(\mathfrak p)=p$. Therefore for every divisor $m$ of $n$, we obtain that $p$ is a primitive root of unity of $m$ and hence
$$[\Q(\zeta_m):\Q(\zeta_m)\cap \Q(\sqrt{d})]=[\Q(\zeta_m):\Q]=\varphi(m)=\ord_mp=\ord_m( N(\mathfrak p))\,.$$
   Thus $R[G]$ is clean. 
   
   Suppose the item 1.(c) holds.
   By Lemma \ref{2.11} we have $ N(\mathfrak p)=p^2$. Thus 
   $$[\Q(\zeta_2):\Q(\zeta_2)\cap \Q(\sqrt{d})]=[\Q(\zeta_2):\Q]=\varphi(2)=\ord_2p^2=\ord_2( N(\mathfrak p))\,,$$
   whence $R[G]$ is clean.

1.2. Conversely, suppose $R[G]$ is clean. Then $[\Q(\zeta_n):\Q(\zeta_n)\cap \Q(\sqrt{d})]=[\Q(\zeta_n):\Q]=\ord_n( N(\mathfrak p))$ implies that $\varphi(n)=\ord_n( N(\mathfrak p))$.
 Thus either $ N(\mathfrak p)=p$ is a primitive root of unity of $n$, or $ N(\mathfrak p)=p^2$ and $\ord_np=\varphi(n)$ is odd, i.e., $n\le 2$. The assertions follow by Lemma \ref{2.11}.
 \medskip

 2.1.
 Suppose that $R[G]$ is clean. Since $\Delta=4d\nmid 4$, we have
  $[\Q(\zeta_4):\Q(\zeta_4)\cap \Q(\sqrt{d})]=[\Q(\zeta_4):\Q]=\ord_4( N(\mathfrak p))$. Thus $\varphi(4)=\ord_4( N(\mathfrak p))$ and hence $ N(\mathfrak p)=p\equiv 3\pmod 4$.
   If $p\mid d$, then $p\mid n$, a contradiction. Thus $p\nmid d$ and hence by Lemma \ref{2.11}.2 $\big(\frac{\Delta}{p}\big)=1$.

  Since $[\Q(\zeta_n):\Q(\zeta_n)\cap \Q(\sqrt{d})]=[\Q(\zeta_n):\Q(\sqrt{d})]=\ord_n( N(\mathfrak p))$, we obtain that $\varphi(n)/2=\ord_n( N(\mathfrak p))=\ord_np$. Since $4\mid n$, we may assume that $n=2^{\ell}n'$ with $\ell\ge 2$ and $n'$ is odd. Thus $(\Z/n\Z)^{\times}\cong (\Z/2^{\ell}\Z)^{\times}\times (\Z/n'\Z)^{\times}$. Since $(\Z/n\Z)^{\times}$ has an element of order $\varphi(n)/2$,  we obtain that $n'=1$ if $\ell\ge 3$ and $n'$ is a prime power if $\ell=2$.  Thus  $n=4q^{\ell}$ and $d\t q^{\ell}$, where $q$ is a prime. Note that $d$ is square-free. Therefore $|d|=q$ is a prime.

%  If $d\equiv 2\pmod 4$, then $|d|=q=2$ is a prime.
%  Suppose $d\equiv 3\pmod 4$. Then $q\ge 3$. If $|d|=q^t$ with $t\ge 2$, then  $4d\nmid q$ and $4d\nmid 4q$. Thus  $q-1=\ord_qp$ and $2(q-1)=\ord_{4q}p$. Since $p^{(q-1)/2}\equiv -1\pmod q$ and $q^2\equiv 1\pmod 4$, we have $p^{q-1}\equiv 1\pmod {4q}$, a contradiction to  $2(q-1)=\ord_{4q}p$. Therefore $|d|=q$ is a prime.

  \medskip
2.2. Conversely, $\big(\frac{\Delta}{p}\big)=1$ implies that $ N(\mathfrak p)=p$.
Suppose $|d|=2$ and $n=2^{\ell}$ with $\ell\ge 3$. Let $m$ be a positive divisor of $n$. If $m=4$,  then $|\Q(\zeta_4):\Q(\zeta_4)\cap \Q(\sqrt{d})|=2$ and $\ord_4p=2$ by $p\equiv 3\pmod 4$. Thus $[\Q(\zeta_4):\Q(\zeta_4)\cap \Q(\sqrt{d})]=\ord_4p$. If $m=2^t$ with $t\ge 3$, then $[\Q(\zeta_m):\Q(\zeta_m)\cap \Q(\sqrt{d})]=2^{t-2}$ and $\ord_mp=m/4=2^{t-2}$ by $2^{\ell-2}=\varphi(n)/2=\ord_np$.  Thus $[\Q(\zeta_m):\Q(\zeta_m)\cap \Q(\sqrt{d})]=\ord_m( N(\mathfrak p))$.
  Putting all these together, we obtain that $R[G]$ is clean.

  Suppose $|d|\ge 3$ is a prime and $n=4|d|^{\ell}$. Let $m$ be a positive divisor of $n$. If $m=|d|^t$ for some $1\le t\le \ell$, then $4d\nmid m$ and hence $[\Q(\zeta_m):\Q(\zeta_m)\cap \Q(\sqrt{d})]=\varphi(m)=|d|^{t-1}(|d|-1)$. Since $p$ is a primitive root of unity of $|d|^{\ell}$, we obtain $\varphi(m)=\ord_mp$. Therefore  $[\Q(\zeta_m):\Q(\zeta_m)\cap \Q(\sqrt{d})]=\ord_m( N(\mathfrak p))$. If $m=2|d|^t$ for some $1\le t\le \ell$, then $4d\nmid m$ and hence $[\Q(\zeta_m):\Q(\zeta_m)\cap \Q(\sqrt{d})]=\varphi(m)=|d|^{t-1}(|d|-1)$. Since $p$ is a primitive root of unity of $|d|^{\ell}$, we obtain $\varphi(m)=\ord_mp$. Therefore  $[\Q(\zeta_m):\Q(\zeta_m)\cap \Q(\sqrt{d})]=\ord_m(N(\mathfrak p))$. If $m=4|d|^t$ for some $1\le t\le \ell$, then $4d\mid m$ and hence $[\Q(\zeta_m):\Q(\zeta_m)\cap \Q(\sqrt{d})]=\varphi(m)/2=|d|^{t-1}(|d|-1)$. Since $p$ is a primitive root of unity of $|d|^{\ell}$, we obtain $\varphi(m)=\ord_mp$. Therefore  $[\Q(\zeta_m):\Q(\zeta_m)\cap \Q(\sqrt{d})]=\ord_m(N(\mathfrak p))$.
 If $m=4$,  then $[\Q(\zeta_4):\Q(\zeta_4)\cap \Q(\sqrt{2})]=2$ and $\ord_4p=2$ as  $p\equiv 3\pmod 4$. Thus $[\Q(\zeta_4):\Q(\zeta_4)\cap \Q(\sqrt{2})]=\ord_4p$.
    Putting all these together, we obtain that $R[G]$ is clean.

 \medskip
3.
If $p\mid d$, then $p\mid n$, a contradiction. Therefore $\big(\frac{d}{p}\big)=1$ or $-1$.

3.1. Let $R[G]$ be clean. Suppose $|d|$ is a prime and $n=|d|^{\ell}$ or $n=2|d|^{\ell}$ for some $\ell\in \N$.
If $\big(\frac{d}{p}\big)=-1$, then $ N(\mathfrak p)=p^2$ and hence
$[\Q(\zeta(n)):\Q(\zeta(n))\cap \Q(\sqrt{d})]=\ord_n p^2$ implies either $\varphi(n)=\ord_np$ or $\ord_np=\varphi(n)/2$ is odd.
If $\big(\frac{d}{p}\big)=1$, then $ N(\mathfrak p)=p$ and hence $[\Q(\zeta(n)):\Q(\zeta(n))\cap \Q(\sqrt{d})]=\ord_n p$ implies that $\varphi(n)/2=\ord_np$. Putting all these together, we have either $\ord_np=\frac{2\varphi(n)}{3+\big(\frac{d}{p}\big)}$, or $\ord_np=\varphi(n)/2$ is odd with $\big(\frac{d}{p}\big)=-1$. Note that $\varphi(n)/2$ is odd if and only if $|d|\equiv 3\pmod 4$, i.e., $d<0$. Therefore (a) holds.

Otherwise, there exists an $m\mid n $ with $m\ge 3$ such that $d\nmid m$. Therefore
$[\Q(\zeta_m):\Q(\zeta_m)\cap Q(\sqrt{d})]=\varphi(m)=\ord_m( N(\mathfrak p))$. Since $\varphi(m)$ must be even, we obtain that  $ N(\mathfrak p)=p$ and hence $\big(\frac{d}{p}\big)=1$. Since $d\t n$, we have  $\varphi(n)/2=\ord_np$. Therefore $(\Z/n\Z)^{\times}\cong C_{\varphi(n)}$ or $C_2\oplus C_{\varphi(n)/2}$,  which implies that $n=q_1^{\ell_1}q_2^{\ell_2}$ or $2q_1^{\ell_1}q_2^{\ell_2}$ or $4q_1^{\ell_1}$, where $q_1,q_2$ are distinct odd primes with $q_1\t d$, $\ell_1\in \N$ and $\ell_2\in \N_0$.
 Note that $d$ is square-free.
Then either $|d|=q_1$ and $n\in \{q_1^{\ell_1}q_2^{\ell_2}, 2q_1^{\ell_1}q_2^{\ell_2}, 4q_1^{\ell_1}\}$ with $\ell_1,\ell_2\in \N$,    or $|d|=q_1q_2$ and $n\in \{q_1^{\ell_1}q_2^{\ell_2}, 2q_1^{\ell_1}q_2^{\ell_2}\}$ with $\ell_1,\ell_2\in \N$.

Suppose $|d|=q_1$.  If  $n=q_1^{\ell_1}q_2^{\ell_2}$ or $2q_1^{\ell_1}q_2^{\ell_2}$, then $[\Q(\zeta_{q_1^{\ell_1}}):\Q(\zeta_{q_1^{\ell_1}})\cap Q(\sqrt{d})]=\varphi(q_1^{\ell_1})/2=\ord_{q_1^{\ell_1}}p$ and $[\Q(\zeta_{q_1^{\ell_2}}):\Q(\zeta_{q_2^{\ell_2}})\cap Q(\sqrt{d})]=\varphi(q_2^{\ell_2})=\ord_{q_2^{\ell_2}}p$. Since 
\begin{align*}
[\Q(\zeta_{q_1^{\ell_1}q_2^{\ell_2}}):\Q(\zeta_{q_1^{\ell_1}q_2^{\ell_2}})\cap Q(\sqrt{d})]&=\varphi(q_1^{\ell_1}q_2^{\ell_2})/2=\ord_{q_1^{\ell_1}q_2^{\ell_2}}(p)\\
&=\lcm(\ord_{q_1^{\ell_1}}p, \ord_{q_2^{\ell_2}}p)=\frac{\ord_{q_1^{\ell_1}}p \ord_{q_2^{\ell_2}}p}{\gcd(\ord_{q_1^{\ell_1}}p, \ord_{q_2^{\ell_2}}p)}\\
&=\frac{\varphi(q_1^{\ell_1})/2 \cdot \varphi(q_2^{\ell_2})}{\gcd(\varphi(q_1^{\ell_1})/2, \varphi(q_2^{\ell_2}))}\\
&=\frac{\varphi(q_1^{\ell_1}q_2^{\ell_2})/2}{\gcd(q_1^{\ell_1-1}(q_1-1)/2, q_2^{\ell_2-1}(q_2-1))}\,,
\end{align*}
we have $\gcd(q_1^{\ell_1-1}(q_1-1)/2, q_2^{\ell_2-1}(q_2-1))=1$ which implies that $d=-q_1\equiv 1\pmod 4$. Therefore (c) holds.
If $n=4q_1^{\ell_1}$, then $p\neq 2$ and $[\Q(\zeta_{q_1^{\ell_1}}):\Q(\zeta_{q_1^{\ell_1}})\cap Q(\sqrt{d})]=\varphi(q_1^{\ell_1})/2=\ord_{q_1^{\ell_1}}p$, $[\Q(\zeta_{4}):\Q(\zeta_{4})\cap Q(\sqrt{d})]=\varphi(4)=\ord_{4}p$ which implies that $p\equiv 3\pmod 4$.
 Since 
\begin{align*}
[\Q(\zeta_{4q_1^{\ell_1}}):\Q(\zeta_{4q_1^{\ell_1}})\cap Q(\sqrt{d})]&=\varphi(4q_1^{\ell_1})/2=\ord_{4q_1^{\ell_1}}(p)\\
&=\lcm(\ord_{4}p, \ord_{q_1^{\ell_1}}p)=\frac{2\ord_{q_1^{\ell_1}}p}{\gcd(2, \ord_{q_1^{\ell_1}}p)}\\
&=\frac{\varphi(4q_1^{\ell_1})/2}{\gcd(2, q_1^{\ell_1-1}(q_1-1)/2)}\,,
\end{align*}
we have $\gcd(2, q_1^{\ell_1-1}(q_1-1)/2)=1$, whence $q_1\equiv 3\pmod 4$. Therefore (b) holds.

Suppose $|d|=q_1q_2$. Then  $[\Q(\zeta_{q_1^{\ell_1}}):\Q(\zeta_{q_1^{\ell_1}})\cap Q(\sqrt{d})]=\varphi(q_1^{\ell_1})=\ord_{q_1^{\ell_1}}(p)$ and $[\Q(\zeta_{q_2^{\ell_2}}):\Q(\zeta_{q_2^{\ell_2}})\cap Q(\sqrt{d})]=\varphi(q_2^{\ell_2})=\ord_{q_2^{\ell_2}}(p)$, whence $p$ is a primitive root of unity of both $q_1^{\ell_1}$ and $q_2^{\ell_2}$. 
Since 
\begin{align*}
[\Q(\zeta_{q_1^{\ell_1}q_2^{\ell_2}}):\Q(\zeta_{q_1^{\ell_1}q_2^{\ell_2}})\cap Q(\sqrt{d})]&=\varphi(q_1^{\ell_1}q_2^{\ell_2})/2=\ord_{q_1^{\ell_1}q_2^{\ell_2}}(p)\\
&=\lcm(\ord_{q_1^{\ell_1}}p, \ord_{q_2^{\ell_2}}p)=\frac{\ord_{q_1^{\ell_1}}p \ord_{q_2^{\ell_2}}p}{\gcd(\ord_{q_1^{\ell_1}}p, \ord_{q_2^{\ell_2}}p)}\\
&=\frac{\varphi(q_1^{\ell_1}) \varphi(q_2^{\ell_2})}{\gcd(\varphi(q_1^{\ell_1}), \varphi(q_2^{\ell_2}))}\\
&=\frac{\varphi(q_1^{\ell_1}q_2^{\ell_2})}{\gcd(q_1^{\ell_1-1}(q_1-1), q_2^{\ell_2-1}(q_2-1))}\,,
\end{align*}
we have $\gcd(q_1^{\ell_1-1}(q_1-1)/2, q_2^{\ell_2-1}(q_2-1)/2)=1$. Therefore (d) holds.

%Suppose $|d|=q_1^{t_1}q_2^{t_2}$ with $t_1,t_2\in \N$. Then $p$ is  a primitive root of unity of $q_1^{\ell_1}$ and $q_2^{\ell_2}$. If $t_1+t_2\ge 3$, then $d\nmid q_1q_2$ and hence $\varphi(q_1q_2)=\ord_{q_1q_2}p$, a contradiction (as $ q_1q_2$  has no primitive root). Thus $t_1=t_2=1$. If $q_1\equiv 1\pmod 4$ and $q_2\equiv 1\pmod 4$, then there is a subgroup $H$ of $(\Z/n\Z)^{\times} (\cong C_{\varphi(n)}$ or $C_2\oplus C_{\varphi(n)/2}) $ such that $H\cong C_4\oplus C_4$, a contradiction. Since $d=q_1q_2\equiv 1\pmod 4$, we have $q_1\equiv 3\pmod 4$ and $q_2\equiv 3\pmod 4$ and $q_1,q_2, (q_1-1)/2, (q_2-1)/2$ are pairwise co-prime integers.

\medskip
3.2. Conversely,
suppose that (a) holds. Let $m$ be a positive divisor of $n$ with  $m\ge 3$. Then $d\mid m$. If $\big(\frac{d}{p}\big)=1$, then $ N(\mathfrak p)=p$ and hence $[\Q(\zeta_m):\Q(\zeta_m)\cap \Q(\sqrt{d})]=\varphi(m)/2=\ord_mp=\ord_m( N(\mathfrak p))$,  implying that $R[G]$ is clean. If $\big(\frac{d}{p}\big)=-1$ and either $\ord_np=\varphi(n)$ or $\ord_np=\varphi(n)/2$ with $|d|=-d\equiv 3\pmod 4$, then $ N(\mathfrak p)=p^2$, $\ord_np^2=\varphi(n)/2$, and hence $[\Q(\zeta_m):\Q(\zeta_m)\cap \Q(\sqrt{d})]=\varphi(m)/2=\ord_mp^2=\ord_m( N(\mathfrak p))$, implying that $R[G]$ is clean. 

Suppose that (b) holds. Then $ N(\mathfrak p)=p$ and $\ord_{q^i}p=q^{i-1}(q-1)/2$ is odd for all $i\in [1,\ell]$. Let $m$ be a positive divisor of $n$ with  $m\ge 3$.
Then $d\mid m$ and hence
$[\Q(\zeta_m):\Q(\zeta_m)\cap \Q(\sqrt{d})]=\varphi(m)/2=\ord_mp=\ord_m( N(\mathfrak p))$.
Therefore $R[G]$ is clean.

Suppose that (c) holds. Then $ N(\mathfrak p)=p$, $\ord_{q_1^i}p=q_1^{i-1}(q_1-1)/2$ is odd,  $p$ is a primitive root of $q_2^j$, and $\gcd(\ord_{q_1^i}p, \ord_{q_2^i}p)=1$ for all $i\in [1,\ell_1]$  and $j\in [1,\ell_2]$. Let $m$ be a positive divisor of $n$ with  $m\ge 3$.
If $m=q_2^t$ or $2q_2^t$ for some $1\le t\le \ell_2$, then $d\nmid m$ and hence
$[\Q(\zeta_m):\Q(\zeta_m)\cap \Q(\sqrt{d})]=\varphi(m)=\ord_mp=\ord_m( N(\mathfrak p))$.
Otherwise $d\mid m$ and hence  $[\Q(\zeta_m):\Q(\zeta_m)\cap \Q(\sqrt{d})]=\varphi(m)/2=\ord_mp=\ord_m( N(\mathfrak p))$.
Therefore $R[G]$ is clean.

Suppose that (d) holds. Then $ N(\mathfrak p)=p$. Let $m$ be a positive divisor of $n$ with  $m\ge 3$.
If $m=q_1^{t_1}$, or $2q_1^{t_1}$, or $q_2^{t_2}$, or $2q_2^{t_2}$ for some $1\le t_1\le \ell_1$ or some $1\le t_2\le \ell_2$, then $d\nmid m$ and hence
 $[\Q(\zeta_m):\Q(\zeta_m)\cap \Q(\sqrt{d})]=\varphi(m)=\ord_mp=\ord_m( N(\mathfrak p))$.
 If $m=q_1^{t_1}q_2^{t_2}$ or $2q_1^{t_1}q_2^{t_2}$, then $d\mid m$ and hence  $[\Q(\zeta_m):\Q(\zeta_m)\cap \Q(\sqrt{d})]=\varphi(m)/2=\ord_mp=\ord_m( N(\mathfrak p))$.
 Therefore $R[G]$ is clean. 
\end{proof}

Next we characterize when such a group ring is $*$-clean. We first prove the following lemma.

\begin{lemma}\label{l22}
Let $d\neq 1$ be a non-zero square free integer.
Then $\Q(\sqrt{d})(\zeta_n+\zeta_n^{-1})=\Q(\sqrt{d})(\zeta_n)$ if and only if  either  \big( $d<0$ and $\Delta\mid n$ \big)   or $n\le 2$, where $n\in \N$ and $\Delta$ is the discriminant of $\Q(\sqrt{d})$.
\end{lemma}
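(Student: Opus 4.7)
The plan is to analyse the tower of fields $F \subseteq L \subseteq M$ with $F = \Q(\zeta_n + \zeta_n^{-1})$ (the maximal totally real subfield of $\Q(\zeta_n)$), $L = \Q(\sqrt{d})(\zeta_n + \zeta_n^{-1}) = F(\sqrt{d})$, and $M = \Q(\sqrt{d})(\zeta_n) = L(\zeta_n)$. Since $\zeta_n$ satisfies the polynomial $X^2 - (\zeta_n+\zeta_n^{-1})X + 1 \in F[X] \subseteq L[X]$, we always have $[M:L] \in \{1,2\}$, and the problem reduces to deciding when $\zeta_n \in L$.

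The case $n \le 2$ is immediate: $\zeta_n \in \{\pm 1\} \subseteq \Q$, so both sides collapse to $\Q(\sqrt{d})$. So assume $n \ge 3$; then $\zeta_n \notin \R$ while $F \subseteq \R$. I would split on the sign of $d$. If $d > 0$, then $\sqrt{d} \in \R$, so $L = F(\sqrt{d}) \subseteq \R$ cannot contain $\zeta_n$, and thus $L \ne M$. If $d < 0$, then $\sqrt{d}$ is purely imaginary, so $\sqrt{d} \notin F$, giving $[L:F]=2$.

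Now within the $d<0$ subcase, I would prove both directions by comparing degrees over $F$. For the backward direction, assume $\Delta \mid n$; Lemma~\ref{2.10} gives $\sqrt{d} \in \Q(\zeta_n)$, so $M = \Q(\zeta_n) = F(\zeta_n)$, which has degree $2$ over $F$. Since $L \subseteq M$ and $[L:F]=2=[M:F]$, we conclude $L=M$. For the forward direction, assume $L=M$, so $\zeta_n = a + b\sqrt{d}$ with $a,b \in F$ and $b \ne 0$ (as $\zeta_n \notin F$). Applying the complex conjugation automorphism, which fixes $F \subseteq \R$ and sends $\sqrt{d}$ to $-\sqrt{d}$, one obtains $\zeta_n^{-1} = a - b\sqrt{d}$; subtracting gives $\sqrt{d} = (\zeta_n - \zeta_n^{-1})/(2b) \in \Q(\zeta_n)$, and Lemma~\ref{2.10} then forces $\Delta \mid n$.

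The main technical point, and the only place where something genuinely has to be checked, is the complex-conjugation step in the forward direction of the $d<0$ subcase: one has to verify that conjugation really acts on $L$ as the nontrivial $F$-automorphism of $F(\sqrt{d})$, which uses precisely that $\sqrt{d}$ is purely imaginary and $F \subseteq \R$. Everything else is a clean dimension count once the tower $F \subseteq L \subseteq M$ has been set up.
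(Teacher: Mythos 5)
Your proposal is correct, and its skeleton matches the paper's: dispose of $n\le 2$, use $n\ge 3 \Rightarrow \zeta_n\notin\R$ to kill the case $d>0$, and for $d<0$ with $\Delta\mid n$ conclude equality by a degree count after invoking Lemma~\ref{2.10} to get $\sqrt{d}\in\Q(\zeta_n)$. The one place you genuinely diverge is the forward direction for $d<0$: the paper argues the contrapositive by comparing degrees over $\Q$ (if $\Delta\nmid n$, Lemma~\ref{2.10} gives $\sqrt{d}\notin\Q(\zeta_n)$, so $[\Q(\sqrt{d})(\zeta_n):\Q]=2\varphi(n)$ while $[\Q(\sqrt{d})(\zeta_n+\zeta_n^{-1}):\Q]=\varphi(n)$ because $\sqrt{d}$ is not real), whereas you assume equality, write $\zeta_n=a+b\sqrt{d}$ with $a,b\in F=\Q(\zeta_n+\zeta_n^{-1})$, apply complex conjugation to get $\zeta_n^{-1}=a-b\sqrt{d}$, and extract $\sqrt{d}=(\zeta_n-\zeta_n^{-1})/(2b)\in\Q(\zeta_n)$ before applying Lemma~\ref{2.10}. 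Both routes reduce to the same input (Lemma~\ref{2.10}); yours trades the paper's purely numerical dimension count for an explicit element computation, at the cost of the (easy but necessary) check that conjugation fixes $F$ and sends $\sqrt{d}\mapsto-\sqrt{d}$, and that $b\neq 0$. Note also that your own setup already yields the shorter argument: since $[L:\Q]=\varphi(n)$ for $d<0$, the assumption $\zeta_n\in L$ forces $L=\Q(\zeta_n)$, hence $\sqrt{d}\in\Q(\zeta_n)$, which is essentially the paper's count in disguise.
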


\begin{proof}
 If $n\le 2$, it is obvious that $\Q(\sqrt{d})(\zeta_n+\zeta_n^{-1})=\Q(\sqrt{d})(\zeta_n)$. Now we let $n\ge 3$.

 Suppose that  $d<0$ and $\Delta\mid n$. Then by Lemma \ref{2.10} $\Q(\sqrt{d})\subset \Q(\zeta_n)$ and hence $\Q(\sqrt{d})(\zeta_n+\zeta_n^{-1})\subset\Q(\zeta_n)$. Since $n\ge 3$ and $d<0$, we have $$[\Q(\zeta_n):\Q(\zeta_n+\zeta_n^{-1})]=[\Q(\sqrt{d})(\zeta_n+\zeta_n^{-1}):\Q(\zeta_n+\zeta_n^{-1})]=2\,.$$ Therefore $\Q(\sqrt{d})(\zeta_n+\zeta_n^{-1})=\Q(\zeta_n)=\Q(\sqrt{d})(\zeta_n)$.

Suppose that  $d>0$. Then $\Q(\sqrt{d})(\zeta_n+\zeta_n^{-1})\subset \R$ and by $n\ge 3$, we have $\Q(\sqrt{d})(\zeta_n)\not\subset \R$. Hence $\Q(\sqrt{d})(\zeta_n+\zeta_n^{-1})\neq\Q(\sqrt{d})(\zeta_n)$.

Suppose that  $d<0$ and $\Delta\nmid n$. Thus by Lemma \ref{2.10} $\sqrt{d}\not\in \Q(\zeta_n)$. Therefore
$[\Q(\sqrt{d})(\zeta_n):\Q]=2\varphi(n)$ and $[\Q(\sqrt{d})(\zeta_n+\zeta_n^{-1}):\Q]=\varphi(n)$. It follows that $\Q(\sqrt{d})(\zeta_n+\zeta_n^{-1})\neq\Q(\sqrt{d})(\zeta_n)$.
\end{proof}

\begin{proposition}\label{p3}
Let $G$ be a finite abelian group with exponent $n$. Then $\Q(\sqrt{d})[G]$ is $*$-clean if and only if $n\ge 3$ and  either  $d>0$ or  $\Delta\nmid n$, where $\Delta$ is the discriminant of $\Q(\sqrt{d})$.
\end{proposition}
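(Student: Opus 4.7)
The plan is to mirror the proof of Proposition \ref{p2} almost verbatim: combine the general characterization of $*$-cleanness given by \cite[Theorem 1.2]{Ha-Re-Zh17a} with Lemma \ref{l22} (which plays for $\Q(\sqrt d)$ the role that Proposition \ref{p1} plays for $\Q(\zeta_n)$). Since $\Q(\sqrt d)[G]$ is semisimple (we are in characteristic zero), it is automatically clean, so the content of $*$-cleanness lies entirely in the requirement that each primitive central idempotent be fixed by the canonical involution $g\mapsto g^{-1}$. In the abelian case the primitive central idempotents correspond to orbits of characters of $G$ under $\mathrm{Gal}(\Q(\sqrt d)(\zeta_{\exp(G)})/\Q(\sqrt d))$, and the cited theorem expresses the orbit-wise $*$-stability precisely as the condition that, for each divisor $m\ge 3$ of $\exp(G)$, one has $\Q(\sqrt d)(\zeta_m)\ne \Q(\sqrt d)(\zeta_m+\zeta_m^{-1})$; in addition one needs $\exp(G)\ge 3$ so that a non-trivial character contribution is present.

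The second step is to feed this into Lemma \ref{l22}. That lemma says $\Q(\sqrt d)(\zeta_m+\zeta_m^{-1})=\Q(\sqrt d)(\zeta_m)$ iff $m\le 2$ or $(d<0$ and $\Delta\mid m)$. Hence, for a divisor $m\ge 3$ of $\exp(G)$, the desired strict inequality holds iff $d>0$ or $\Delta\nmid m$.

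The third step is the reduction from "all divisors $m\ge 3$ of $n:=\exp(G)$" to the single statement involving $n$ itself. If $d>0$, the condition of step two holds at every $m$ automatically. If $d<0$, one needs $\Delta\nmid m$ for every divisor $m\ge 3$ of $n$; since $|\Delta|\ge 3$ for every non-zero square-free $d\ne 1$, any $m\mid n$ with $\Delta\mid m$ automatically satisfies $m\ge 3$, so the family of conditions collapses to the single condition $\Delta\nmid n$. Combining with the blanket requirement $\exp(G)\ge 3$ yields precisely the stated criterion: $\Q(\sqrt d)[G]$ is $*$-clean iff $n\ge 3$ and either $d>0$ or $\Delta\nmid n$.

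The only potential obstacle is checking that we are invoking \cite[Theorem 1.2]{Ha-Re-Zh17a} in exactly the right form; this is the analogue of how Proposition \ref{p2} was proved, so no additional work should be needed beyond identifying $K=\Q(\sqrt d)$ as the base field and observing that the "cyclotomic" input, formerly Proposition \ref{p1}, is now supplied by Lemma \ref{l22}. The rest is a purely bookkeeping reduction, carried out in the third step above.
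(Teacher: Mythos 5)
Your proposal is correct and follows essentially the same route as the paper, whose proof is simply the combination of \cite[Theorem 1.2]{Ha-Re-Zh17a} with Lemma \ref{l22}, in exact parallel with the proof of Proposition \ref{p2}. The extra detail you supply (the character-orbit interpretation and the reduction from all divisors $m\ge 3$ of $n$ to the single condition $\Delta\nmid n$, using $|\Delta|\ge 3$) is sound and only elaborates on what the citation already delivers.
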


\begin{proof}
This result follows from    \cite[Theorem 1.2]{Ha-Re-Zh17a} and Lemma \ref{l22}.
\end{proof}

\begin{theorem}\label{t4}
Let $K=\Q(\sqrt{d})$ be a quadratic field for some non-zero square-free integer $d\neq 1$, $\mathcal O$ its ring of integers, $\mathfrak p\subset \mathcal O$ a nonzero prime ideal with $p\Z=\mathfrak p\cap \mathcal O$, and $G$ a finite abelian group with $p\nmid \exp(G)$.  Let $\Delta$ be the  discriminant of the field extension $K/\Q$. Then
 \begin{enumerate}
 \item if $d>0$, then $\mathcal O_{\mathfrak p}[G]$ is $*$-clean if and only $\mathcal O_{\mathfrak p}[G]$ is clean and $\exp(G)\ge 3$.

 \item if $d<0$, then $\mathcal O_{\mathfrak p}[G]$ is $*$-clean if and only if
  $\Delta\nmid \exp(G)$, $p$ is a primitive root of unity of $\exp(G)$, $\exp(G)\ge 3$, and $\big(\frac{\Delta}{p}\big)=1$ or $0$.
  \end{enumerate}
\end{theorem}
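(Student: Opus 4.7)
The plan is to combine Theorem~\ref{t6}, Proposition~\ref{p3}, and the cleanness characterization from Theorem~\ref{t3}. Since $p\nmid \exp(G)$ by hypothesis, the integer ``$n$'' appearing in Theorem~\ref{t3} (the largest divisor of $\exp(G)$ coprime to $p$) equals $\exp(G)$ itself, so the cleanness conditions from Theorem~\ref{t3} can be read directly in terms of $\exp(G)$. Theorem~\ref{t6} further reduces the question to: $\mathcal O_{\mathfrak p}[G]$ is $*$-clean if and only if it is clean \emph{and} $K[G]=\Q(\sqrt{d})[G]$ is $*$-clean.

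For part~(1), where $d>0$, I would just apply Proposition~\ref{p3}: the disjunctive condition ``$d>0$ or $\Delta\nmid \exp(G)$'' there is automatic from $d>0$, so $\Q(\sqrt{d})[G]$ is $*$-clean if and only if $\exp(G)\ge 3$. Combining with Theorem~\ref{t6} yields the claim immediately, and no case analysis of Theorem~\ref{t3} is needed.

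For part~(2), where $d<0$, the same two tools give that $*$-cleanness is equivalent to $\mathcal O_{\mathfrak p}[G]$ being clean together with $\exp(G)\ge 3$ and $\Delta\nmid \exp(G)$. Thus $\Delta\nmid n$ and we land in case~(1) of Theorem~\ref{t3}: cleanness is equivalent to one of (a), (b), (c). The hypothesis $\exp(G)\ge 3$ rules out (c), which requires $n=2$. The remaining options (a) and (b) both say that $p$ is a primitive root of $n$ and impose a condition on $\Delta$ modulo $p$ --- for $p=2$ the condition is $\Delta\not\equiv 5\pmod 8$, while for odd $p$ it is $\big(\tfrac{\Delta}{p}\big)\in\{0,1\}$ --- and these are exactly the conditions listed in Theorem~\ref{t4}(2).

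The one genuinely non-routine point --- and really the only potential obstacle --- is checking that the single symbol condition ``$\big(\tfrac{\Delta}{p}\big)=1$ or $0$'' in the statement correctly captures both clauses. Here I would use that the discriminant of a quadratic field satisfies $\Delta\equiv 0,1\pmod 4$, so an odd $\Delta$ is either $\equiv 1\pmod 8$ or $\equiv 5\pmod 8$; under the Kronecker extension of the Legendre symbol at $2$, these give values $+1$ and $-1$ respectively, while an even $\Delta$ gives $0$. Hence ``$\Delta\not\equiv 5\pmod 8$'' and ``$\big(\tfrac{\Delta}{2}\big)\in\{0,1\}$'' coincide, and the two cases (a) and (b) of Theorem~\ref{t3}(1) merge into the unified condition in the theorem.
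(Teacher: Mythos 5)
Your proposal is correct and follows essentially the same route as the paper: Theorem \ref{t6} together with Proposition \ref{p3} reduces $*$-cleanness to cleanness of $\mathcal O_{\mathfrak p}[G]$ plus $\exp(G)\ge 3$ (and, for $d<0$, $\Delta\nmid \exp(G)$), and Theorem \ref{t3}(1) with $n=\exp(G)$ then yields the primitive-root and symbol conditions after discarding case (c). Your explicit reconciliation of the $p=2$ case, reading $\big(\tfrac{\Delta}{2}\big)$ as a Kronecker symbol so that ``$\big(\tfrac{\Delta}{p}\big)=1$ or $0$'' matches ``$\Delta\not\equiv 5\pmod 8$'', is a notational point the paper's proof passes over silently, and you handle it correctly.
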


\begin{proof}
1. Let $d>0$.
Suppose $\mathcal O_{\mathfrak p}[G]$ is clean and $\exp(G)\ge 3$. Then by Proposition \ref{p3} $\Q(\sqrt{d})[G]$ is $*$-clean. It follows from    Theorem \ref{t6} that $\mathcal O_{\mathfrak p}[G]$ is $*$-clean.

Conversely, suppose $\mathcal O_{\mathfrak p}[G]$ is $*$-clean. Then by Theorem \ref{t6} $\Q(\sqrt{d})[G]$ is $*$-clean. It follows from    Proposition \ref{p3} that $\exp(G)\ge 3$.

2. Let $d<0$. Suppose that $\Delta\nmid \exp(G)$, $p$ is a primitive root of unity of $\exp(G)$, $\exp(G)\ge 3$, and $\big(\frac{\Delta}{p}\big)=1$ or $0$. Then by Theorem \ref{t3}
$\mathcal O_{\mathfrak p}$ is clean and
by Proposition \ref{p3} $\Q(\sqrt{d})[G]$ is $*$-clean. It follows from   Theorem \ref{t6} that $\mathcal O_{\mathfrak p}[G]$ is $*$-clean.

Conversely,
suppose $\mathcal O_{\mathfrak p}[G]$ is $*$-clean. Then by Theorem \ref{t6} $\Q(\sqrt{d})[G]$ is $*$-clean and hence by Proposition \ref{p3} $\exp(G)\ge 3$ and $\Delta\nmid \exp(G)$. It follows from    Theorem \ref{t3}.1 that $p$ is a primitive root of unity of $\exp(G)$ and $\big(\frac{\Delta}{p}\big)=1$ or $0$.
\end{proof}

We close the paper with the following examples which provide some ($*$-clean or non $*$-clean) clean group rings for each case of the characterizations of Theorems \ref{t3} and \ref{t4}.

\begin{example}
\begin{enumerate}
 \item Let $\mathcal O$ be the ring of integers of $\Q(\sqrt{d})$ and let $G$ be a finite abelian group with $\gcd(\exp(G), d)=1$, where $d\neq 1$ is a square free integer and $\exp(G)\neq 4$ has a primitive root. Suppose that  $d=\delta d_0$ such that $d_0$ is the maximal odd positive  divisor of $d$. Thus $\delta\in \{-1,2,-2\}$. For every prime $p$ with $p\equiv 1\pmod {8d_0}$, we have $\big(\frac{d}{p}\big)=\big(\frac{d_0}{p}\big)=1$. Since there exists $x\in \N$ with $\gcd(x, \exp(G))=1$ such that $\ord_{\exp(G)}x=\varphi(\exp(G))$, for every  prime $p$ with $p\equiv x\pmod {\exp(G)}$, we have $\ord_{\exp(G)}p=\varphi(\exp(G))$. Note that $\mathsf v_2(\exp(G))\le 1$. By Dirichlet's prime number theorem, there is a prime $p$ such that $p\equiv 1\pmod {8d_0}$ and $p\equiv x\pmod {\exp(G)}$. Let $\mathfrak p\subset \mathcal O$ be a prime ideal such that $\mathfrak p\cap \Z=p\Z$.
 Then by Theorem \ref{t3}.1 $\mathcal O_{\mathfrak p}[G]$ is clean. If $\exp(G)\ge 3$, then by Theorem \ref{t4} $\mathcal O_{\mathfrak p}[G]$ is $*$-clean.

\item Let $\mathcal O$ be the ring of integers of $\Q(\sqrt{-2})$, let $\mathfrak p\subset \mathcal O$ be a prime ideal with $\mathfrak p\cap \Z=3\Z$, and let $G$ be a finite abelian group with $\exp(G)=8$.
Then Theorem \ref{t3}.2 and Theorem \ref{t4}.2 imply that $\mathcal O_{\mathfrak p}[G]$ is clean but not $*$-clean.

\item Let $\mathcal O$ be the ring of integers of $\Q(\sqrt{3})$, let $\mathfrak p\subset \mathcal O$ be a prime ideal with $\mathfrak p\cap \Z=11\Z$, and let $G$ be a finite abelian group with $\exp(G)=12$.
Then Theorem \ref{t3}.2 and Theorem \ref{t4}.1 imply that $\mathcal O_{\mathfrak p}[G]$ is clean as well as $*$-clean.

\item Let $\mathcal O$ be the ring of integers of $\Q(\sqrt{5})$, let $\mathfrak p\subset \mathcal O$ be a prime ideal with $\mathfrak p\cap \Z=19\Z$, and let $G$ be a finite abelian group with $\exp(G)=5$.
Then Theorem \ref{t3}.3.a and Theorem \ref{t4}.1 imply that $\mathcal O_{\mathfrak p}[G]$ is clean as well as $*$-clean.

\item Let $\mathcal O$ be the ring of integers of $\Q(\sqrt{-3})$, let $\mathfrak p\subset \mathcal O$ be a prime ideal with $\mathfrak p\cap \Z=5\Z$, and let $G$ be a finite abelian group with $\exp(G)=6$.
Then Theorem \ref{t3}.3.a and Theorem \ref{t4}.2 imply that $\mathcal O_{\mathfrak p}[G]$ is clean but not $*$-clean.

\item Let $\mathcal O$ be the ring of integers of $\Q(\sqrt{33})$, let $\mathfrak p\subset \mathcal O$ be a prime ideal with $\mathfrak p\cap \Z=2\Z$, and let $G$ be a finite abelian group with $\exp(G)=33$.
Then Theorem \ref{t3}.3.d and Theorem \ref{t4}.1 imply that $\mathcal O_{\mathfrak p}[G]$ is clean as well as $*$-clean.

\end{enumerate}

\end{example}

%\bibliography{../zhong}
%\bibliographystyle{amsplain}

\providecommand{\bysame}{\leavevmode\hbox to3em{\hrulefill}\thinspace}
\providecommand{\MR}{\relax\ifhmode\unskip\space\fi MR }
% \MRhref is called by the amsart/book/proc definition of \MR.
\providecommand{\MRhref}[2]{%
  \href{http://www.ams.org/mathscinet-getitem?mr=#1}{#2}
}
\providecommand{\href}[2]{#2}

\end{document}